\documentclass[a4paper,11pt]{article}
\usepackage{graphicx}
\usepackage{amsfonts}
\usepackage{amsmath}
\usepackage{amssymb}
\usepackage{amsthm}
\usepackage{hyperref}
\usepackage{xcolor}
\title{New upper bounds for the period of a negative orientable sequence}
\author{Chris J. Mitchell and Peter R. Wild
\\Information Security Group, Royal Holloway, University of London\\
\href{mailto:me@chrismitchell.net}{me@chrismitchell.net};
~~~~\href{mailto:peterrwild@gmail.com}{peterrwild@gmail.com}}
\date{6th February 2026}
\theoremstyle{plain}
\newtheorem{lemma}{Lemma}[section]
\newtheorem{theorem}[lemma]{Theorem}
\newtheorem{corollary}[lemma]{Corollary}

\theoremstyle{definition}
\newtheorem{definition}{Definition}[section]

\theoremstyle{remark}
\newtheorem{remark}{Remark}[section]

\begin{document}

\maketitle

\begin{abstract}
Negative orientable sequences, i.e.\ periodic sequences with elements from a finite alphabet of
size at least three in which an $n$-tuple or the negative of its reverse appears at most once
in a period of the sequence, were introduced by Alhakim et al.\ in 2024. The main goal in
defining them was as a means of generating orientable sequences, which have automatic position
location applications, although they are potentially of interest in their own right.  In this
paper we develop new upper bounds on the period of negative orientable sequences which, for
$n>2$, are significantly sharper than the previous known bound.  The approach used to develop
the new bounds involves examining the nodes in the subgraph of the de Bruijn graph
corresponding to a negative orientable sequence, and to consider the implications of the fact
that the in-degree of every vertex in this subgraph must equal the out-degree. However, despite
improving the bounds, a gap remains between the largest known period for a negative orientable
sequence and the corresponding bounds for every $n>2$.
\end{abstract}

\section{Introduction}

Orientable sequences are periodic sequences with elements from an alphabet of size $k$ with the
property that, for some $n$ (the \emph{order}), any $n$-tuple occurs at most once in a period of
the sequence in either direction.  Clearly an orientable sequence of order $n$, an
$\mathcal{OS}_k(n)$, is also an $\mathcal{OS}_k(m)$ for every $m\geq n$. Orientable sequences were
introduced in the early 1990s \cite{Burns92,Burns93} in connection with position location
applications.

The binary case was studied by Dai et al. \cite{Dai93}, who presented an upper bound and a
construction method yielding sequences with asymptotically optimal period.  Further methods of
construction for this case appeared much more recently \cite{Gabric24,Gabric24b,Mitchell22}. The
general alphabet case (i.e.\ for $k\geq2$) was first considered by Alhakim et al.\
\cite{Alhakim24a}. Subsequently, construction methods for this case have been described,
\cite{Gabric25,Mitchell25,Mitchell25a}, and upper bounds for the period of a $k$-ary orientable
sequence were given in \cite{Mitchell25a} and improved in \cite{Mitchell25}.  Although the
construction method of Gabri\'{c} and Sawada \cite{Gabric25} has been shown to yield sequences of
asymptotically optimal period, the problem of determining the precise value of the largest period
for an $\mathcal{OS}_k(n)$ remains an open question except for small values of $n$ --- for further
details of the current state of the art see \cite{Mitchell25}.

Alhakim at al.\ \cite{Alhakim24a} introduced the notion of a \emph{negative orientable sequence},
the main focus of this paper, to help construct new orientable sequences. A negative orientable
sequence of order $n$, $\mathcal{NOS}_k(m)$, is a periodic $k$-ary sequence in which an $n$-tuple
or the negative of its reverse appears at most once in a period of the sequence. Such sequences
were subsequently studied further in \cite{Mitchell25,Mitchell25a}, where a range of methods of
construction are described, and an upper bound on the period of a $\mathcal{NOS}_k(m)$ is given in
\cite{Mitchell25}.  Much like the orientable sequence case, determining the precise value of the
largest period for an $\mathcal{NOS}_k(n)$ remains unresolved, in this case even for small $n$.

In this paper we use the same approach as employed in \cite{Mitchell25} to develop new period upper
bounds for orientable sequences to develop new and sharper bounds for the period of a negative
orientable sequence.  The main tool is to examine nodes in the subgraph of the de Bruijn graph
corresponding to a negative orientable sequence, and to consider the implications of the fact that
the in-degree of every vertex in this subgraph must equal the out-degree.

The remainder of the paper is structured as follows.  Section~\ref{section:preliminaries} gives
fundamental definitions, some elementary counting results for $n$-tuples, and a brief introduction
to the de Bruijn graph. This is followed in Section~\ref{section:bounds} by the development of the
new period bounds. Finally, Section~\ref{section:conclusions} concludes the paper, summarising the
state of knowledge regarding the maximum period for negative orientable sequences.

\section{Preliminaries} \label{section:preliminaries}

\subsection{Basic definitions}

We need the following key definitions, largely following the notational conventions of
\cite{Mitchell25a}.  We refer throughout to a $k$-ary $n$-tuple to mean a sequence of length $n$ of
symbols drawn from $\mathbb{Z}_k$.  Note that we assume that $k>2$ throughout since the negative of
a tuple equals the tuple when $k=2$.

Since we are interested in tuples occurring either forwards or backwards in a sequence, we
introduce the notion of a reversed tuple, so that if $\mathbf{u} = (u_0,u_1,\ldots,u_{n-1})$ is a
$k$-ary $n$-tuple then $\mathbf{u}^R = (u_{n-1},u_{n-2}, \ldots,u_0)$ is its \emph{reverse}.  We
are also interested in negating all the elements of a tuple, and hence if $\mathbf{u} =
(u_0,u_1,\ldots,u_{n-1})$ is a $k$-ary $n$-tuple, we write $-\mathbf{u}$ for
$(-u_0,-u_1,\ldots,-u_{n-1})$. We write $\mathbf{s}_n(i)$ for the tuple
$(s_i,s_{i+1},\ldots,s_{i+n-1})$.

\begin{definition}[\cite{Alhakim24a}]
A $k$-ary \emph{$n$-window sequence $S = (s_i)$} is a periodic sequence of elements from
$\mathbb{Z}_k$ ($k>1$, $n>1$) with the property that no $n$-tuple appears more than once in a
period of the sequence, i.e.\ with the property that if $\mathbf{s}_n(i) = \mathbf{s}_n(j)$ for
some $i,j$, then $i \equiv j \pmod m$ where $m$ is the period of the sequence.
\end{definition}

This paper is concerned with a special class of $n$-window sequences: negative orientable
sequences.

\begin{definition}[\cite{Alhakim24a}]  \label{definition:NOS}
A $k$-ary $n$-window sequence $S=(s_i)$ is said to be a \emph{negative orientable sequence of order
$n$} (a $\mathcal{NOS}_k(n)$) if $\mathbf{s}_n(i)\not=-{\mathbf{s}_n(j)}^R$, for any $i,j$.
\end{definition}

\begin{definition}[\cite{Mitchell25a}]
Suppose $n\geq 1$ and $k>2$.  A $k$-ary $n$-tuple $\mathbf{u}=(u_0,u_1,\dots,u_{n-1})$ is said
to be \emph{negasymmetric} if $u_i=-u_{n-1-i}$ for every $i$, $0\leq i\leq n-1$, i.e.\ if
$\mathbf{u}=-\mathbf{u}^R$.
\end{definition}

Clearly an $\mathcal{NOS}_q(n)$ cannot contain any negasymmetric $n$-tuples.

\begin{definition}
Suppose $n\geq2$ and $k>2$.  If $\mathbf{a}=(a_0,a_1,\ldots,a_{n-1})$ is a $k$-ary $n$-tuple,
then $\mathbf{a}$ is said to be \emph{uniform} if and only if $a_i=a_j$ for every $i,j\in
\{0,1,\ldots,n-1\}$.
\end{definition}

\begin{definition}
Suppose $n\geq2$ and $k>2$.  A $k$-ary $n$-tuple $\mathbf{u}=(u_0,u_1,\ldots,u_{n-1})$ is said
to be \emph{alternating} if and only if there exist $c_0$ and $c_1$ ($c_0\neq c_1$) such that
$u_{2i}=c_0$ and $u_{2i+1}=c_1$ for every $i\geq0$.
\end{definition}

\begin{remark}
Since the above definition requires $c_0\neq c_1$, an alternating $n$-tuple is always non-uniform.
\end{remark}

\begin{definition}
Suppose $n\geq2$ and $k>2$.  A $k$-ary $n$-tuple $\mathbf{u}=(u_0,u_1,\ldots,u_{n-1})$ is said
to be \emph{uniform-alternating} if and only if $u_{i+1}=-u_i$ for every $i$, $0\leq i\leq
n-2$.
\end{definition}

\begin{remark}
A uniform-alternating $n$-tuple is uniform if $u_0=0$ or $u_0=k/2$ ($k$ even).
\end{remark}

\begin{definition}
Suppose $n\geq2$ and $k>2$.  An $n$-tuple $(a_0,a_1,\ldots,a_{n-1})$ is said to be
\emph{left-semi-negasymmetric} (or \emph{left-sns} for short) if $a_i=-a_{n-i-2}$, $0\leq i\leq
n-2$. Equivalently, $(a_0,a_1,\ldots,a_{n-1})$ is left-sns if and only if
$(a_0,a_1,\ldots,a_{n-2})$ is negasymmetric.

Analogously, an $n$-tuple $(a_0,a_1,\ldots,a_{n-1})$ is said to be \emph{right-semi-negasymmetric}
(or \emph{right-sns} for short) if $a_i=-a_{n-i}$, $1\leq i\leq n-1$.  Equivalently,
$(a_0,a_1,\ldots,a_{n-1})$ is right-sns if and only if $(a_1,a_2,\ldots,a_{n-1})$ is negasymmetric.
\end{definition}

\subsection{Counting sets of $n$-tuples}

The following simple results will be of use below.

\begin{lemma}  \label{lemma:NOS_tuple_numbers}
Suppose $n\geq1$ and $k>2$.  Then:
\begin{itemize}
\item[i)]  the number of $k$-ary negasymmetric $n$-tuples is
\begin{align*}
k^{(n-1)/2} & \mbox{~~if $n$ is odd and $k$ is odd} \\
2k^{(n-1)/2} & \mbox{~~if $n$ is odd and $k$ is even;} \\
k^{n/2} & \mbox{~~if $n$ is even;}
\end{align*}
\item[ii)] the number of uniform $k$-ary $n$-tuples is $k$;
\item[iii)] the number of uniform-alternating $k$-ary $n$-tuples is $k$;
\item[iv)] the number of $n$-tuples that are both uniform and uniform-alternating is 1 if
    $k$ is odd and 2 if $k$ is even;
\item[v)] the number of uniform negasymmetric $k$-ary $n$-tuples is 1 if $k$ is odd and 2 if
    $k$ is even;
\item[vi)] the number of uniform-alternating negasymmetric $k$-ary $n$-tuples is 1 if $n$ and
    $k$ are odd, 2 if $n$ is odd and $k$ is even, and $k$ if $n$ is even;
\item[vii)] the number of alternating negasymmetric $k$-ary $n$-tuples is zero if $n$ and $k$
    are both odd, 2 if $n$ is odd and $k$ is even, $k-1$ if $n$ is even and $k$ is odd, and
    $k-2$ if $n$ and $k$ are both even (and in every case they are uniform-alternating);
\item[viii)] the number of left-sns $k$-ary $n$-tuples is:
\begin{align*}
k^{(n+1)/2} & \mbox{~~if $n$ is odd;} \\
k^{n/2} & \mbox{~~if $n$ is even and $k$ is odd;} \\
2k^{n/2} & \mbox{~~if $n$ and $k$ are both even;}
\end{align*}
\item[ix)] the number of non-uniform left-sns $k$-ary $n$-tuples is:
\begin{align*}
k^{(n+1)/2}-1 & \mbox{~~if $n$ and $k$ are both odd;} \\
k^{(n+1)/2}-2 & \mbox{~~if $n$ is odd and $k$ is even;} \\
k^{n/2}-1 & \mbox{~~if $n$ is even and $k$ is odd;} \\
2k^{n/2}-2 & \mbox{~~if $n$ and $k$ are both even;}
\end{align*}
\item[x)] the number of non-uniform-alternating left-sns $k$-ary $n$-tuples is:
\begin{align*}
k^{(n+1)/2}-k & \mbox{~~if $n$ is odd;} \\
k^{n/2}-1 & \mbox{~~if $n$ is even and $k$ is odd;} \\
2k^{n/2}-2 & \mbox{~~if $n$ and $k$ are both even.}
\end{align*}
\item[xi)] the number of non-uniform non-alternating left-sns $k$-ary $n$-tuples is:
\begin{align*}
k^{(n+1)/2}-k & \mbox{~~if $n$ is odd;} \\
k^{n/2}-1 & \mbox{~~if $n$ is even and $k$ is odd;} \\
2k^{n/2}-4 & \mbox{~~if $n$ and $k$ are both even;}
\end{align*}
\end{itemize}
Finally observe that (viii), (ix), (x) and (xi) also hold if left-sns is replaced with right-sns.
\end{lemma}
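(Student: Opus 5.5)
The plan is to reduce every count to the single basic fact behind (i): the constraint $u_i=-u_{n-1-i}$ pairs position $i$ with position $n-1-i$. Each pair with $i\neq n-1-i$ is determined freely by its first entry. When $n$ is odd there is also a middle position $(n-1)/2$, whose entry must satisfy $2u=0$ in $\mathbb{Z}_k$. That equation has one solution ($u=0$) if $k$ is odd and two ($0,k/2$) if $k$ is even. There are $\lfloor n/2\rfloor$ free pairs, which gives (i) directly.

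Parts (ii)--(vii) are then short direct computations.
\begin{itemize}
\item[(ii), (iii)] A uniform or a uniform-alternating tuple is fixed by $u_0$, giving $k$ of each.
\item[(iv), (v)] Being uniform together with either uniform-alternating or negasymmetric forces $u_0=-u_0$, giving $1$ or $2$.
\item[(vi)] Write a uniform-alternating tuple as $u_i=(-1)^iu_0$. The negasymmetry condition reads $(-1)^iu_0=-(-1)^{n-1-i}u_0$. This holds automatically when $n$ is even, giving $k$, and reduces to $2u_0=0$ when $n$ is odd, giving $1$ or $2$.
\item[(vii)] For $n$ odd (and $n\geq3$), positions $0$ and $n-1$ are both even, and positions $1$ and $n-2$ are both odd. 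So we need $2c_0=2c_1=0$ with $c_0\neq c_1$, which gives $0$ solutions for $k$ odd and $2$ for $k$ even. For $n$ even, negasymmetry says exactly $c_1=-c_0$, so the tuple is uniform-alternating, and $c_0\neq c_1$ means $2c_0\neq 0$. This gives $k-1$ or $k-2$.
\end{itemize}

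For (viii)--(xi), use the stated equivalence: a left-sns $n$-tuple is a negasymmetric $(n-1)$-tuple $(a_0,\dots,a_{n-2})$ with an arbitrary last entry $a_{n-1}$. Applying (i) with $n-1$ and multiplying by $k$ gives (viii). For (ix), subtract the uniform left-sns tuples; these are exactly the uniform tuples with $2u=0$, so there are $1$ or $2$. For (x), subtract the uniform-alternating left-sns tuples. The last entry of such a tuple is forced by the pattern, so they correspond bijectively to uniform-alternating negasymmetric $(n-1)$-tuples, and (vi) applied with $n-1$ gives $k$ ($n$ odd) or $1,2$ ($n$ even). For (xi), subtract the uniform and the alternating left-sns tuples; these two sets are disjoint by the Remark. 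Again the last entry of an alternating tuple is forced, so (vii) with $n-1$ counts the alternating ones:
\begin{itemize}
\item $n$ odd: $k-1$ or $k-2$ alternating, plus $1$ or $2$ uniform, subtract $k$ in total;
\item $n$ even: $0$ or $2$ alternating, plus $1$ or $2$ uniform, subtract $1$ or $4$ in total.
\end{itemize}
These match the stated formulas.

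The right-sns statement follows from the reversal map $\mathbf{u}\mapsto\mathbf{u}^R$. It is a bijection taking left-sns tuples to right-sns tuples, and it preserves uniformity, the alternating property and the uniform-alternating property.

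The main obstacle is small-$n$ degeneracy rather than any real difficulty. When $n=2$ the truncated tuple has length $1$, where "alternating" is undefined, and the "positions $1$ and $n-2$" argument in (vii) needs $n\geq3$. I would check these boundary cases by hand. I expect that (xi) may need the standing assumption $n\geq3$: for $n=2$ and $k$ even, every non-uniform left-sns tuple is alternating, so the count is $0$ rather than $2k-4$. I would flag this explicitly in the proof.
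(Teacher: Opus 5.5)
Your proposal is correct and follows essentially the paper's route: reduce (viii)--(xi) to counts of negasymmetric $(n-1)$-tuples via the first $n-1$ entries, then subtract the uniform, uniform-alternating or alternating ones using (v)--(vii); beyond the paper, you prove (i) directly where the paper cites it, and you give a reversal argument for the right-sns versions, which the paper only asserts. Your caveat on (xi) is justified and covers odd $k$ as well: at $n=2$ every non-uniform $2$-tuple is alternating, so the count is $0$ rather than $k-1$. You were also right to assert the uniform-alternating parenthetical in (vii) only for even $n$, because for odd $n$ and even $k$ the tuple $(0,k/2,0,\ldots,0)$ is alternating and negasymmetric but not uniform-alternating, so that clause is false and deserves the same explicit flag.
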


\begin{proof}
(i) is Lemma 3.1 of \cite{Mitchell25a}.  (ii), (iii) and (iv) are immediate.  (v) follows by
observing that a uniform negasymmetric $n$-tuple must have every entry equal to zero or $k/2$.

For (vi), if $n$ is odd then every element in the $n$-tuple must be either 0 or $k/2$ and hence the
$n$-tuple must be uniform; the result follows from (v).  If $n$ is even then every
uniform-alternating $n$-tuple is negasymmetric, and the result follows from (iii).

For (vii), if $n$ is odd then every element in the $n$-tuple must be either 0 or $k/2$, and the
result follows from (v) since an alternating $n$-tuple cannot be uniform.  If $n$ is even then an
alternating negasymmetric $n$-tuple must be uniform-alternating, and the result follows from (vi)
and (iv) since an alternating $n$-tuple cannot be uniform.

(viii) follows from (i) and observing that an $n$-tuple is left-sns if and only if its first $n-1$
entries form a negasymmetric $(n-1)$-tuple.  A left-sns $n$-tuple can only be uniform if it is the
all-zero or the all-$k/2$ $n$-tuple, and (ix) follows from (v) and (viii).

For (x), If $n$ is odd, then a uniform-alternating $n$-tuple is left-sns and in this case the
number is simply (viii) minus (iii).  If $n$ is even, a uniform-alternating $n$-tuple is only
left-sns if it is also uniform, i.e.\ the number is (viii) minus (iv).

Finally, for (xi), first observe that an $n$-tuple cannot be both alternating and uniform by
definition, hence (xi) is simply (ix) less the number of alternating left-sns $n$-tuples.  Also,
the number of alternating left-sns $n$-tuples is the same as the number of alternating
negasymmetric $(n-1)$-tuples, i.e., from Lemma~\ref{lemma:NOS_tuple_numbers}(vii), zero if $n$ is
even and $k$ is odd, 2 if $n$ and $k$ are both even, $k-1$ if $n$ and $k$ are both odd, and $k-2$
if $n$ is odd and $k$ is even.
\end{proof}

\subsection{Negative orientable sequences and the de Bruijn digraph}

We also need the following definitions relating to the de Bruijn graph.

Let $B_k(n-1)$ be the de Bruijn digraph with vertices labeled with $k$-ary $(n-1)$-tuples and edges
labeled with $k$-ary $n$-tuples.

\begin{definition}
Suppose $k>2$ and $n\geq2$.  Let $B_k^{-}(n-1)$ be the subgraph of the de Bruijn digraph
$B_k(n-1)$ with all the edges corresponding to negasymmetric $n$-tuples removed.
\end{definition}

The following elementary result follows immediately from Lemma~\ref{lemma:NOS_tuple_numbers}.

\begin{lemma}  \label{lemma:number_non-negasymmetric_tuples}
If $k>2$ and $n\geq2$ the number $N_k(n)$ of edges in $B_k^-(n-1)$ is
\begin{align*}
k^n-k^{(n-1)/2} & \mbox{~~if $n$ is odd and $k$ is odd} \\
k^n-2k^{(n-1)/2} & \mbox{~~if $n$ is odd and $k$ is even} \\
k^n-k^{n/2} & \mbox{~~if $n$ is even.}
\end{align*}
\end{lemma}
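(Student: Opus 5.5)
The plan is to count directly. In the de Bruijn digraph $B_k(n-1)$ the edges are in one-to-one correspondence with the $k$-ary $n$-tuples: the edge from $(u_0,\ldots,u_{n-2})$ to $(u_1,\ldots,u_{n-1})$ is labelled by $(u_0,u_1,\ldots,u_{n-1})$. Distinct $n$-tuples give distinct edges, and there are no other edges. So $B_k(n-1)$ has exactly $k^n$ edges.

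By definition, $B_k^-(n-1)$ is obtained from $B_k(n-1)$ by deleting exactly those edges whose labels are negasymmetric $n$-tuples, and no vertices or other edges are touched. Since edges correspond bijectively to their labels, the number of deleted edges equals the number of negasymmetric $k$-ary $n$-tuples. Hence $N_k(n)$ is $k^n$ minus that number.

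It remains to substitute Lemma~\ref{lemma:NOS_tuple_numbers}(i), which is valid because $n\geq2\geq1$ and $k>2$. Its three cases give the three claimed expressions:
\begin{itemize}
\item for $n$ odd and $k$ odd, subtract $k^{(n-1)/2}$;
\item for $n$ odd and $k$ even, subtract $2k^{(n-1)/2}$;
\item for $n$ even, subtract $k^{n/2}$.
\end{itemize}

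There is no genuine obstacle. The only point needing care is the bijection between edges of $B_k(n-1)$ and $n$-tuples, which ensures that no edge is counted twice and that removing the negasymmetric labels removes exactly that many edges. This is immediate from the standard definition of the de Bruijn digraph.
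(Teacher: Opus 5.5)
Your proposal is correct and follows essentially the same route as the paper, which states that the result follows immediately from Lemma~\ref{lemma:NOS_tuple_numbers}(i). Concretely, $B_k(n-1)$ has $k^n$ edges, one for each $n$-tuple, and exactly the negasymmetric ones are removed.
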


\begin{definition}
Suppose $k>2$ and $n\geq2$.  Suppose $S$ is an $\mathcal{NOS}_k(n)$.  Let $B^-(S,n)$ be the
subgraph of $B_k(n-1)$ with vertices those of $B_k(n-1)$ and with edges corresponding to those
$n$-tuples which appear in either $S$ or $-S^R$.  We refer to $B^-(S,n)$ as the
nega-sequence-subgraph.
\end{definition}

The following simple lemma is key.

\begin{lemma}  \label{lemma:B-Sn_basic_properties}
Suppose $k>2$ and $n\geq2$.  Suppose $S$ is an $\mathcal{NOS}_k(n)$ of period $m$.  Then:
\begin{itemize}
\item[i)] $B^-(S,n)$ contains $2m$ edges;
\item[ii)] every vertex of $B^-(S,n)$ has in-degree equal to its out-degree; and
\item[iii)] $B^-(S,n)$ is a subgraph of $B^-_k(n-1)$.
\end{itemize}
\end{lemma}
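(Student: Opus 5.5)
The three parts can be handled in the order (iii), (i), (ii), all directly from Definition~\ref{definition:NOS} and the structure of the de Bruijn graph.

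For (iii), take an edge of $B^-(S,n)$. It is an $n$-tuple appearing in $S$ or in $-S^R$. As already observed, $S$ contains no negasymmetric $n$-tuple: if $\mathbf{s}_n(i)$ were negasymmetric, then $\mathbf{s}_n(i)=-\mathbf{s}_n(i)^R$, contradicting the definition with $j=i$. The $n$-tuples of $-S^R$ are exactly the tuples $-\mathbf{s}_n(j)^R$. Since $\mathbf{u}\mapsto-\mathbf{u}^R$ is an involution, such a tuple is negasymmetric if and only if $\mathbf{s}_n(j)$ is. Hence every edge of $B^-(S,n)$ is an edge of $B_k^-(n-1)$.

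For (i), I first note that $S$ is an $n$-window sequence, so its $m$ tuples $\mathbf{s}_n(i)$, $0\le i<m$, are distinct. The same holds for the $m$ tuples of $-S^R$, because $\mathbf{u}\mapsto-\mathbf{u}^R$ is injective. The NOS condition says precisely that no tuple of $S$ equals a tuple of $-S^R$. So the two sets are disjoint and together give exactly $2m$ edges. I treat the edge set as the set of distinct $n$-tuples, one edge per tuple. This is the step needing the most care, since it uses both the window property and the NOS property.

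For (ii), I view a period of $S$ as a closed walk in $B_k(n-1)$. Its $m$ edges $\mathbf{s}_n(i)$ run from vertex $\mathbf{s}_{n-1}(i)$ to vertex $\mathbf{s}_{n-1}(i+1)$, with indices taken mod $m$. In a closed walk each vertex is entered exactly as often as it is left. Since the edges of $S$ are distinct, the subgraph they form has in-degree equal to out-degree at every vertex. The sequence $-S^R$ is also periodic with period $m$, so the same argument applies to it. By (i) the two edge sets are disjoint, so the degrees add, and (ii) follows for $B^-(S,n)$. Vertices with no incident edges trivially satisfy the condition.
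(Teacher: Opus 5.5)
Your proof is correct and follows essentially the same route as the paper: you count the $2m$ distinct tuples using the window property together with the NOS condition, obtain degree balance from the two edge-disjoint closed walks (Eulerian circuits) traced by $S$ and $-S^R$, and deduce the subgraph claim from the absence of negasymmetric tuples. You are somewhat more explicit than the paper, for example in checking that $-S^R$ also contains no negasymmetric tuples and that its $m$ tuples are pairwise distinct, but these are details the paper leaves implicit rather than a different argument.
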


\begin{proof}
\begin{itemize}
\item[i)] Since $S$ has period $m$, a total of $2m$ $n$-tuples appear in $S$ and $-S^R$. They
    are all distinct since $S$ is an $\mathcal{NOS}_k(n)$.
\item[ii)] $S$ and $-S^R$ correspond to edge-disjoint Eulerian circuits in $B^-(S,n)$, and the
    result follows.
\item[iii)] This is immediate since an $\mathcal{NOS}_k(n)$ cannot contain any negasymmetric
    $n$-tuples.
\end{itemize}
\end{proof}

\section{The new bounds}  \label{section:bounds}

\subsection{In-out-degree constraints on the nega-sequence-subgraph}
\label{subsection:NOS_in-out-constraints}

In this section and the next we consider properties of the nega-sequence-subgraph for a
$\mathcal{NOS}_k(n)$ $S$.  We first have the following.

\begin{lemma}  \label{lemma:B-_vertex_degree_k-1}
Suppose $k>2$ and $n\geq3$.  A vertex in $B^-_k(n-1)$ has:
\begin{itemize}
\item[i)] in-degree $k-1$ if and only if its label is left-sns; otherwise it has in-degree $k$;
\item[ii)] out-degree $k-1$ if and only if its label is right-sns; otherwise it has out-degree
    $k$.
\end{itemize}
\end{lemma}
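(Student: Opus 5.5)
In $B_k(n-1)$ every vertex has in-degree and out-degree exactly $k$, so I will simply count, for a fixed vertex, how many of its incoming (respectively outgoing) edges carry negasymmetric $n$-tuples and are therefore deleted in $B^-_k(n-1)$. The proof then comes down to showing that this number is always either $0$ or $1$, and that it equals $1$ exactly when the label is left-sns (respectively right-sns).

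For (i), fix a vertex $\mathbf{v}=(v_0,\ldots,v_{n-2})$. Its $k$ incoming edges are labelled $(x,v_0,\ldots,v_{n-2})$ with $x\in\mathbb{Z}_k$. Such an $n$-tuple $\mathbf{u}=(u_0,\ldots,u_{n-1})$ is negasymmetric iff $u_i=-u_{n-1-i}$ for all $i$. For $i=0$ and $i=n-1$ this says $x=-v_{n-2}$. For $1\le i\le n-2$ it becomes $v_{i-1}=-v_{n-2-i}$, that is, $v_j=-v_{n-3-j}$ for $0\le j\le n-3$. This last condition says precisely that $(v_0,\ldots,v_{n-3})$ is negasymmetric, i.e.\ that $\mathbf{v}$ is left-sns in the sense of the definition (with $n-1$ in place of $n$). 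The hypothesis $n\ge3$ ensures $\mathbf{v}$ has length at least $2$, so the definition applies. Hence if $\mathbf{v}$ is left-sns, exactly one choice of $x$, namely $x=-v_{n-2}$, gives a negasymmetric edge, and the in-degree is $k-1$. If $\mathbf{v}$ is not left-sns, no incoming edge is removed and the in-degree is $k$.

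Part (ii) is the mirror argument. The outgoing edges are $(v_0,\ldots,v_{n-2},y)$, and negasymmetry forces $y=-v_0$ together with $(v_1,\ldots,v_{n-2})$ being negasymmetric, which is the right-sns condition. Alternatively, (ii) follows from (i) by applying the map $\mathbf{u}\mapsto-\mathbf{u}^R$. This map preserves negasymmetry, reverses edge directions in the de Bruijn graph, and interchanges left-sns and right-sns labels.

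The only real obstacle is the index bookkeeping: the vertex labels are $(n-1)$-tuples, so the left-sns definition must be applied with $n-1$ in place of $n$. I will also check the boundary case where the middle index pairs with itself, which occurs when $n$ is odd. There the condition reads $u_i=-u_i$, and this is already included in the negasymmetry of $(v_0,\ldots,v_{n-3})$, so no extra condition arises.
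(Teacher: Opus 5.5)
Your proposal is correct and follows the same route as the paper: start from in- and out-degree $k$ in $B_k(n-1)$, and observe that an incoming (resp.\ outgoing) edge is deleted exactly when it is negasymmetric, which can happen for at most one edge, namely $x=-v_{n-2}$ (resp.\ $y=-v_0$), and does happen precisely when the label is left-sns (resp.\ right-sns). Your explicit index computation just fills in details that the paper's short proof leaves implicit.
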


\begin{proof}
For (i), the in-degree of every vertex in $B_k(n-1)$ is $k$.  However, if (and only if) an inbound
edge corresponds to a negasymmetric $n$-tuple, then this edge will not be in $B_k^-(n-1)$.  Such an
event can occur if and only if the vertex is labelled with a left-sns $(n-1)$-tuple, and there can
only be one such negasymmetric inbound edge.  The result follows.  The proof of (ii) follows using
an exactly analogous argument.
\end{proof}

By Lemma~\ref{lemma:B-Sn_basic_properties}(ii), this immediately tells us that some edges in
$B^-_k(n-1)$ cannot occur in $B^-(S,n)$ if $S$ is a $\mathcal{NOS}_k(n)$.  However, before
describing exactly when this occurs, we first need the following simple result.

\begin{lemma}  \label{lemma:combining_lns-rns-symmetric}
Suppose $k>2$ and $n\geq4$.  Suppose the $(n-1)$-tuple $(a_0,a_1,\ldots,a_{n-2})$ is both
left-sns and right-sns.  Then

\begin{itemize}
\item[i)] if $n$ is odd and $k$ is odd then $(a_0,a_1,\ldots,a_{n-2})$ is the all-zero uniform
    $(n-1)$-tuple;
\item[ii)] if $n$ is odd and $k$ is even then $(a_0,a_1,\ldots,a_{n-2})$ is either uniform or
    alternating, where $a_i\in\{0,k/2\}$ for every $i$;
\item[iii)] if $n$ is even then $(a_0,a_1,\ldots,a_{n-2})$ is uniform-alternating.
\end{itemize}
Further, all the $(n-1)$-tuples in (i), (ii) and (iii) are both left-sns and right-sns.
\end{lemma}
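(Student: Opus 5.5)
Write $m=n-1\ge 3$ for the length of the tuple $(a_0,\ldots,a_{m-1})$. By definition, left-sns means $a_i=-a_{m-2-i}$ for $0\le i\le m-2$, and right-sns means $a_i=-a_{m-i}$ for $1\le i\le m-1$. The plan is to combine the two conditions into a shift relation and then read off the structure from it.

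\emph{Step 1: derive the shift relation.} For $1\le i\le m-2$, apply right-sns to get $a_i=-a_{m-i}$. Since $1\le m-i-2+... $ is not needed, note only that $m-i\le m-1$. Now apply left-sns at the index $j=m-i-2$, which lies in $[0,m-2]$, to get $a_{m-i-2}=-a_i$. Writing both relations in terms of $a_{m-i}$ and $a_{m-i-2}$, we obtain
\begin{align*}
a_{m-i}=-a_i=a_{m-i-2},
\end{align*}
valid for $1\le i\le m-2$. Substituting $t=m-i$, this says $a_t=a_{t-2}$ for $2\le t\le m-1$. Hence all entries with even index equal some $c_0$, and all entries with odd index equal some $c_1$. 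Because $m\ge3$, both parities actually occur among indices $0,\ldots,m-1$.

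\emph{Step 2: impose negasymmetry to pin down $c_0$ and $c_1$.} Left-sns says $(a_0,\ldots,a_{m-2})$ is negasymmetric, so $a_0=-a_{m-2}$. Index $m-2$ has the same parity as $0$, so $c_0=-c_0$, that is $2c_0=0$. Similarly right-sns gives $a_1=-a_{m-1}$.

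If $m$ is even, i.e.\ $n$ is odd, then $m-1$ is odd, so this gives $2c_1=0$. For odd $k$ we conclude $c_0=c_1=0$, which is case (i). For even $k$ we get $c_0,c_1\in\{0,k/2\}$, so the tuple is uniform or alternating, which is case (ii).

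If $m$ is odd, i.e.\ $n$ is even, then $m-1$ is even, so $a_{m-1}=c_0$ and $a_1=c_1$, giving $c_1=-c_0$. Using $2c_0=0$ as well, $c_0=-c_0$ and hence $c_1=-c_0$ is consistent with $a_{i+1}=-a_i$ for all $i$. So the tuple is uniform-alternating, which is case (iii). I should check whether the $2c_0=0$ constraint is really forced for $n$ even. Here $m-2$ is odd, so $a_0=-a_{m-2}$ gives $c_0=-c_1$ rather than $2c_0=0$, and I expect no extra constraint. This parity bookkeeping is the step most prone to error.

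\emph{Step 3: prove the converse.} Check each family directly against both definitions. A uniform-alternating tuple of odd length $m$ satisfies $a_j=(-1)^j a_0$. Then $a_{m-2-i}=(-1)^{m-2-i}a_0=-(-1)^i a_0$, using that $m$ is odd, and this equals $-a_i$; right-sns follows the same way. Tuples with entries in $\{0,k/2\}$ satisfy $x=-x$, so negasymmetry reduces to the condition $a_i=a_{m-2-i}$ (respectively $a_i=a_{m-i}$). For uniform or alternating tuples of even length $m$, the indices $i$ and $m-2-i$ (respectively $i$ and $m-i$) have the same parity, so this condition holds. The all-zero tuple is a special case of this.

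The main obstacle is only the careful parity bookkeeping in Step 2. The hypothesis $n\ge4$ is used to ensure $m\ge3$, so that the shift relation in Step 1 is nonvacuous and both parities of index are present.
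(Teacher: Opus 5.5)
Your argument is correct and is essentially the paper's: composing the two reflections gives $a_t=a_{t-2}$, and one relation from each condition then pins down $c_0,c_1$. The differences are minor: you use the end relations $a_0=-a_{m-2}$ and $a_1=-a_{m-1}$ where the paper uses the central entries, and you also verify the converse (the ``Further'' clause), which the paper states without proof. One cleanup is needed: deducing $c_0=-c_0$ from $a_0=-a_{m-2}$ is valid only when $m$ is even, so move it inside that case, and drop ``Using $2c_0=0$ as well'' from the $n$ even case, where $c_1=-c_0$ alone already makes the tuple uniform-alternating.
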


\begin{proof}
Suppose the $(n-1)$-tuple $(a_0,a_1,\ldots,a_{n-2})$ is both left-sns and right-sns. Then
$a_i=-a_{n-i-3}$, $0\leq i\leq n-3$, and $a_i=-a_{n-i-1}$, $1\leq i\leq n-2$.  Since $n\geq4$ this
implies that there exist constants $c_0$ and $c_1$ such that $c_0=a_{2i}$, $0\leq 2i\leq n-2$, and
$c_1=a_{2j+1}$, $0\leq 2j+1\leq n-2$.

If $n$ is even then we have $a_{(n-2)/2}=-a_{(n-2)/2-1}$ (from left-semi-negasymmetry), and hence
$c_0=-c_1$, and (iii) follows.

If $n$ is odd then, from left-semi-negasymmetry, $a_{(n-3)/2}=-a_{(n-3)/2}$ and from
right-semi-negasymmetry $a_{(n-1)/2}=-a_{(n-1)/2}$, and hence $c_0=-c_0$ and $c_1=-c_1$.  If $k$ is
odd then we have $c_0=c_1=0$, and (i) follows.  if $k$ is even then $c_0$ and $c_1$ are both either
0 or $k/2$, and (ii) follows.
\end{proof}

The following result follows immediately from Lemmas~\ref{lemma:B-_vertex_degree_k-1} and
\ref{lemma:combining_lns-rns-symmetric}.

\begin{corollary}  \label{corollary:nega-left-right-in-out}
Suppose $k>2$ and $n\geq4$. Suppose $S$ is an $\mathcal{NOS}_k(n)$ and consider a vertex in
$B_k^-(n-1)$ with label $\mathbf{a}=(a_0,a_1,\ldots,a_{n-2})$, where $\mathbf{a}$ is non-uniform.
\begin{itemize}
\item[i)] if $n$ and $k$ are both odd, and $\mathbf{a}$ is left-sns, then its in-degree is
    $k-1$ and its out-degree is $k$;
\item[ii)] if $n$ and $k$ are both odd and $\mathbf{a}$ is right-sns, then its out-degree is
    $k-1$ and its in-degree is $k$;
\item[iii)] if $n$ is odd and $k$ is even and $\mathbf{a}$ is left-sns and non-alternating,
    then its in-degree is $k-1$ and its out-degree is $k$;
\item[iv)] if $n$ is odd and $k$ is even and $\mathbf{a}$ is right-sns and non-alternating,
    then its in-degree is $k$ and its out-degree is $k-1$;
\item[v)] if $n$ is even and $\mathbf{a}$ is left-sns and not uniform-alternating, then its
    in-degree is $k-1$ and its out-degree is $k$;
\item[vi)] if $n$ is even and $\mathbf{a}$ is right-sns and not uniform-alternating, then its
    out-degree is $k-1$ and its in-degree is $k$.
\end{itemize}
\end{corollary}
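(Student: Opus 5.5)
The plan is to combine Lemma~\ref{lemma:B-_vertex_degree_k-1} with Lemma~\ref{lemma:combining_lns-rns-symmetric}. Here the degrees are those of the vertex in $B_k^-(n-1)$. By Lemma~\ref{lemma:B-_vertex_degree_k-1}, a vertex whose label is left-sns has in-degree $k-1$. Its out-degree is $k-1$ if the label is also right-sns, and $k$ otherwise. So in each case, for a left-sns label $\mathbf{a}$ satisfying the stated hypotheses, it suffices to show that $\mathbf{a}$ is \emph{not} right-sns. Symmetrically, for a right-sns label satisfying the hypotheses, it suffices to show it is not left-sns.

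To get this, I argue by contradiction. Suppose $\mathbf{a}$ is both left-sns and right-sns; since $n\geq4$, Lemma~\ref{lemma:combining_lns-rns-symmetric} applies. I then split into the three parity cases.

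If $n$ and $k$ are both odd, part (i) of that lemma forces $\mathbf{a}$ to be the all-zero tuple. This contradicts the standing assumption that $\mathbf{a}$ is non-uniform, and gives (i) and (ii) of the corollary.

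If $n$ is odd and $k$ is even, part (ii) forces $\mathbf{a}$ to be uniform or alternating. Uniform is excluded by hypothesis and alternating is excluded by the extra non-alternating hypothesis in (iii) and (iv).

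If $n$ is even, part (iii) forces $\mathbf{a}$ to be uniform-alternating. This is excluded by the hypotheses of (v) and (vi).

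In each case $\mathbf{a}$ is therefore not both left-sns and right-sns, and the degree conclusions follow directly from Lemma~\ref{lemma:B-_vertex_degree_k-1}(i) and (ii). In the right-sns cases this gives out-degree $k-1$ and in-degree $k$.

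There is no real obstacle. The only care needed is to check that each case's hypotheses exclude exactly the possibilities listed in the corresponding part of Lemma~\ref{lemma:combining_lns-rns-symmetric}. In particular, in the $n$ odd, $k$ even case, the alternating tuples with entries in $\{0,k/2\}$ are ruled out by the non-alternating hypothesis rather than by non-uniformity.
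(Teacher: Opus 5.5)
Your proposal is correct and takes the same route as the paper, which simply states that this corollary follows immediately from Lemma~\ref{lemma:B-_vertex_degree_k-1} and Lemma~\ref{lemma:combining_lns-rns-symmetric}. You have filled in that step correctly: if a label satisfying each case's hypotheses were both left-sns and right-sns, Lemma~\ref{lemma:combining_lns-rns-symmetric} would force it to be uniform, alternating, or uniform-alternating, which those hypotheses exclude.
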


The above corollary immediately tells us that certain edges in $B^-_k(n-1)$ cannot occur in
$B^-(S,n)$, as follows.

\begin{corollary}  \label{corollary:NOS_excluded_tuples_by_unequal_degree}
Suppose $k>2$ and $n\geq4$ and $S$ is an $\mathcal{NOS}_k(n)$. Then
\begin{itemize}
\item[i)] if $n$ and $k$ are both odd, for every vertex in $B^-_k(n-1)$ corresponding to a
    non-uniform left-sns $(n-1)$-tuple $(a_0,a_1,\ldots,a_{n-2})$, there is an edge
    $(a_0,a_1,\ldots,a_{n-2},x)$ in $B^-_k(n-1)$, for some $x$, that is not in $B^-(S,n)$.
\item[ii)] if $n$ and $k$ are both odd, for every vertex in $B^-_k(n-1)$ corresponding to a
    non-uniform right-sns $(n-1)$-tuple $(a_0,a_1,\ldots,a_{n-2})$, there is an edge
    $(y,a_0,a_1,\ldots,a_{n-2})$ in $B^-_k(n-1)$, for some $y$, that is not in $B^-(S,n)$.
\item[iii)] if $n$ is odd and $k$ is even, for every vertex in $B^-_k(n-1)$ corresponding to a
    non-uniform non-alternating left-sns $(n-1)$-tuple $(a_0,a_1,\ldots,a_{n-2})$, there is an
    edge $(a_0,a_1,\ldots,a_{n-2},x)$ in $B^-_k(n-1)$, for some $x$, that is not in $B^-(S,n)$.
\item[iv)] if $n$ is odd and $k$ is even, for every vertex in $B^-_k(n-1)$ corresponding to a
    non-uniform non-alternating right-sns $(n-1)$-tuple $(a_0,a_1,\ldots,a_{n-2})$, there is an
    edge $(y,a_0,a_1,\ldots,a_{n-2})$ in $B^-_k(n-1)$, for some $x$, that is not in $B^-(S,n)$.
\item[v)] if $n$ is even, for every vertex in $B^-_k(n-1)$ corresponding to a
    non-uniform-alternating left-sns $(n-1)$-tuple $(a_0,a_1,\ldots,a_{n-2})$, there is an edge
    $(a_0,a_1,\ldots,a_{n-2},x)$ in $B^-_k(n-1)$, for some $x$, that is not in $B^-(S,n)$.
\item[vi)] if $n$ is even, for every vertex in $B^-_k(n-1)$ corresponding to a
    non-uniform-alternating right-sns $(n-1)$-tuple $(a_0,a_1,\ldots,a_{n-2})$, there is an
    edge $(y,a_0,a_1,\ldots,a_{n-2})$ in $B^-_k(n-1)$, for some $y$, that is not in $B^-(S,n)$.
\end{itemize}
\end{corollary}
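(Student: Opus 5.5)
The plan is to combine Lemma~\ref{lemma:B-Sn_basic_properties} with Corollary~\ref{corollary:nega-left-right-in-out}. Take a vertex $\mathbf{a}$ with the property stated in one of the cases, say case (i): $n$ and $k$ odd and $\mathbf{a}$ a non-uniform left-sns $(n-1)$-tuple. Corollary~\ref{corollary:nega-left-right-in-out}(i) says that in $B^-_k(n-1)$ the vertex $\mathbf{a}$ has in-degree $k-1$ and out-degree $k$.

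By Lemma~\ref{lemma:B-Sn_basic_properties}(iii), $B^-(S,n)$ is a subgraph of $B^-_k(n-1)$. So the in-degree of $\mathbf{a}$ in $B^-(S,n)$ is at most $k-1$. By Lemma~\ref{lemma:B-Sn_basic_properties}(ii), in $B^-(S,n)$ its out-degree equals its in-degree, hence is also at most $k-1$. The $k$ outgoing edges of $\mathbf{a}$ in $B^-_k(n-1)$ are exactly the $n$-tuples $(a_0,\ldots,a_{n-2},x)$ for the $k$ values of $x$. Fewer than $k$ of them lie in $B^-(S,n)$, so at least one, for some $x$, is missing. This proves (i).

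Cases (iii) and (v) use the same argument, with Corollary~\ref{corollary:nega-left-right-in-out}(iii) and (v) giving the degree pair $(k-1,k)$. The right-sns cases (ii), (iv) and (vi) are symmetric. There Corollary~\ref{corollary:nega-left-right-in-out} gives out-degree $k-1$ and in-degree $k$. Equality of degrees in $B^-(S,n)$ then forces some incoming edge $(y,a_0,\ldots,a_{n-2})$ of $B^-_k(n-1)$ to be absent from $B^-(S,n)$.

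I do not expect a real obstacle, since the statement is a direct counting consequence. The only point needing care is whether the hypotheses of each case match those of the corresponding part of Corollary~\ref{corollary:nega-left-right-in-out}.
\begin{itemize}
\item In case (v), ``non-uniform-alternating'' must be read as ``non-uniform and not uniform-alternating''. Since a uniform-alternating tuple starting at $0$ or $k/2$ is uniform, this is simply ``not uniform-alternating'', which is exactly what part (v) of the corollary needs.
\item That corollary in turn relies on Lemma~\ref{lemma:combining_lns-rns-symmetric}: under these hypotheses the vertex is left-sns but not right-sns (or the reverse). Lemma~\ref{lemma:B-_vertex_degree_k-1} then gives in-degree and out-degree that genuinely differ, rather than both being $k-1$.
\end{itemize}
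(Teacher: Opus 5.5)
Your proposal is correct and is exactly the paper's argument: the paper simply cites Lemma~\ref{lemma:B-Sn_basic_properties}(ii) together with Corollary~\ref{corollary:nega-left-right-in-out}, and you additionally spell out the subgraph property (iii) it uses implicitly and the count of the $k$ out-edges (or in-edges). One small nit in your remark on case (v): the fact you actually need is that a uniform left-sns tuple has all entries in $\{0,k/2\}$ (from $a_0=-a_{n-3}$) and is therefore uniform-alternating, which is not the converse fact you cite, though this does not affect the argument.
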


\begin{proof}
The result follows immediately from Lemma~\ref{lemma:B-Sn_basic_properties}(ii) and
Corollary~\ref{corollary:nega-left-right-in-out}.
\end{proof}

\subsection{Degree-parity constraints on the nega-sequence-subgraph}
\label{subsection:NOS-degree-parity-constraints}

We first give the following simple lemma.

\begin{lemma}  \label{lemma:ns-symmetric_not_symmetric}
Suppose $k>2$ and $n\geq2$.  If an $(n-1)$-tuple is both left-sns and negasymmetric then it is
uniform. Similarly, if an $(n-1)$-tuple is both right-sns and negasymmetric then it is uniform.
\end{lemma}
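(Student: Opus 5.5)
The plan is to combine the two defining index relations directly and show that together they force consecutive entries to be equal. Write $N=n-1$ and let $\mathbf{a}=(a_0,a_1,\ldots,a_{N-1})$ be the $(n-1)$-tuple. If $N=1$ there is nothing to prove, since a tuple with a single entry is trivially uniform. So assume $N\geq2$.

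For the left-sns case, the two relations are:
\begin{itemize}
\item[(a)] $a_i=-a_{N-2-i}$ for $0\leq i\leq N-2$, since $(a_0,\ldots,a_{N-2})$ is negasymmetric;
\item[(b)] $a_i=-a_{N-1-i}$ for $0\leq i\leq N-1$, since $\mathbf{a}$ itself is negasymmetric.
\end{itemize}
For any $i$ with $0\leq i\leq N-2$, both relations apply. Equating the two expressions for $a_i$ gives $-a_{N-2-i}=-a_{N-1-i}$, that is, $a_{N-2-i}=a_{N-1-i}$. Set $j=N-2-i$. As $i$ ranges over $0,\ldots,N-2$, the index $j$ ranges over the same set, so $a_j=a_{j+1}$ for every $0\leq j\leq N-2$. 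Chaining these equalities gives $a_0=a_1=\cdots=a_{N-1}$, so $\mathbf{a}$ is uniform.

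The right-sns case is handled in the same way. Here the relations are $a_i=-a_{N-i}$ for $1\leq i\leq N-1$, and $a_i=-a_{N-1-i}$ from negasymmetry. Combining them gives $a_{N-i}=a_{N-1-i}$ for $1\leq i\leq N-1$, which again covers every consecutive pair. Alternatively, one can observe that $\mathbf{a}$ is right-sns if and only if $-\mathbf{a}^R$ is left-sns, and that negasymmetry and uniformity are both preserved by the map $\mathbf{a}\mapsto-\mathbf{a}^R$. The right-sns case then follows from the left-sns case.

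There is no real obstacle here; the only care needed is in the index bookkeeping. Specifically, one must check that the ranges of the two relations overlap on all $N-1$ consecutive pairs, so that no pair of adjacent entries is left unconstrained. One should also note that, when the paper's definition of uniform requires length at least $2$, the degenerate case $N=1$ (that is, $n=2$) is interpreted as trivially uniform.
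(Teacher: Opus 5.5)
Your proof is correct and follows essentially the same route as the paper: you equate the left-sns relation $a_i=-a_{n-3-i}$ with the negasymmetry relation $a_i=-a_{n-2-i}$ to force $a_j=a_{j+1}$ for every consecutive pair, and you treat the right-sns case analogously. Your index ranges are stated more carefully than the paper's (which lets $i$ run up to $n-2$ in the left-sns relation), and your reduction of the right-sns case to the left-sns case via $\mathbf{a}\mapsto-\mathbf{a}^R$ is a valid shortcut.
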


\begin{proof}
Let $\mathbf a=(a_0,\dots,a_{n-2})$ be an $(n-1)$-tuple which is both left-sns and negasymmetric.
Then $a_i=-a_{n-2-i}=-a_{n-3-i}$ for $i=0,1,\dots,n-2$. It is immediate that $a_j=a_{j+1}$ for
$j=0,1,\dots,n-3$ and thus $\mathbf a$ is uniform. The second claim follows by an analogous
argument.
\end{proof}

The following lemma is key.

\begin{lemma}  \label{lemma:NOS_negasymmetric_vertex_even_degree}
Suppose $k>2$ and $n\geq2$.  Suppose $S$ is an $\mathcal{NOS}_k(n)$ and consider a vertex in
$B^-(S,n)$ with label $\mathbf{a}=(a_0,a_1,\ldots,a_{n-2})$, where $\mathbf{a}$ is
negasymmetric. Then $\mathbf{a}$ has even in-degree and even out-degree in $B^-(S,n)$.
\end{lemma}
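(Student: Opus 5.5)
The plan is to exploit the involution $\phi$ on $n$-tuples given by $\phi(\mathbf{u})=-\mathbf{u}^R$. By construction the edge set of $B^-(S,n)$ is closed under $\phi$: the $n$-tuples of $-S^R$ are exactly the images under $\phi$ of the $n$-tuples of $S$, and conversely. So the edge set is a union of $S$-tuples and their $\phi$-images. Moreover $\phi$ has no fixed points in this edge set, since an $\mathcal{NOS}_k(n)$ contains no negasymmetric $n$-tuple, and $-S^R$ then contains none either. Hence the edges of $B^-(S,n)$ split into disjoint pairs $\{\mathbf{e},\phi(\mathbf{e})\}$, with $\mathbf{e}\neq\phi(\mathbf{e})$. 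This is just Lemma~\ref{lemma:B-Sn_basic_properties}(i) and (iii) restated.

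Next I check how $\phi$ acts on edges at a negasymmetric vertex $\mathbf{a}=(a_0,\ldots,a_{n-2})$. An out-edge of $\mathbf{a}$ has the form $\mathbf{e}=(a_0,\ldots,a_{n-2},x)$. Then
\[
\phi(\mathbf{e})=(-x,-a_{n-2},\ldots,-a_0)=(-x,a_0,\ldots,a_{n-2}),
\]
using $-a_{n-2-i}=a_i$. This is an \emph{in}-edge of $\mathbf{a}$. In the same way, every in-edge $(y,a_0,\ldots,a_{n-2})$ of $\mathbf{a}$ is sent to the out-edge $(a_0,\ldots,a_{n-2},-y)$. So $\phi$ swaps in-edges and out-edges at $\mathbf{a}$, and therefore does not directly pair out-edges among themselves. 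This is the main obstacle: the pairing argument by itself only reproves in-degree $=$ out-degree.

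To obtain parity, I would use the two sequences more carefully. Let $p$ be the number of occurrences of $\mathbf{a}$ as an $(n-1)$-tuple within one period of $S$. Each occurrence contributes exactly one out-edge and one in-edge from $S$, and these are distinct $n$-tuples because $S$ is an $n$-window sequence. The sequence $-S^R$ contains $\mathbf{a}$ as an $(n-1)$-tuple exactly when $S$ contains $-\mathbf{a}^R=\mathbf{a}$, and at the same number of positions, so $-S^R$ also passes through $\mathbf{a}$ exactly $p$ times.

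The two Eulerian circuits are edge-disjoint, by Lemma~\ref{lemma:B-Sn_basic_properties}(i). Each passage through $\mathbf{a}$ uses one in-edge and one out-edge, so the out-degree of $\mathbf{a}$ in $B^-(S,n)$ is $p+p=2p$, and likewise the in-degree is $2p$. Both are even, which proves the lemma. The delicate point to verify is that distinct occurrences give distinct edges and that $S$ and $-S^R$ share no edges; both facts are already supplied by the $\mathcal{NOS}$ property through Lemma~\ref{lemma:B-Sn_basic_properties}(i).
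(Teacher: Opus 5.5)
Your proof is correct and follows essentially the paper's argument: the edge-disjoint closed walks given by $S$ and $-S^R$ each pass through the negasymmetric vertex $\mathbf{a}$ the same number $p$ of times, since $-\mathbf{a}^R=\mathbf{a}$, so its in-degree and out-degree are both $2p$. Your explicit count of $p$ spells out a step the paper only asserts, and the opening discussion of the involution $\phi$, while correct, is not needed for the conclusion.
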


\begin{proof}
Both $S$ and $-S^R$ correspond to an Eulerian circuit in $B^-(S,n)$, and these circuits are
edge-disjoint and cover all the edges of $B^-(S,n)$. If $\mathbf{a}$ is negasymmetric then both
circuits pass through this vertex equally many times. It follows that $\mathbf{a}$ has even
in-degree and even out-degree in $B^-(S,n)$.
\end{proof}

We also need the following.

\begin{lemma}  \label{lemma:B-vertex-parity}
Suppose $k>2$ and $n\geq2$, and consider a vertex in $B^-_k(n-1)$ with label
$\mathbf{a}=(a_0,a_1,\ldots,a_{n-2})$.  Then in all the following cases this vertex has odd
in-degree and odd out-degree in $B^-_k(n-1)$:
\begin{itemize}
\item[i)] if $k$ is odd and $\mathbf{a}$ is negasymmetric but not uniform;
\item[ii)] if $n$ is odd, $k$ is even and $\mathbf{a}$ is either uniform or alternating, where
    $a_i\in\{0,k/2\}$ for every $i$;
\item[iii)] if $n$ and $k$ are both even and $\mathbf{a}$ is uniform-alternating.
\end{itemize}
\end{lemma}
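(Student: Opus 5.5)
The plan is to read off the in- and out-degrees of the vertex from Lemma~\ref{lemma:B-_vertex_degree_k-1}. By that lemma, in $B^-_k(n-1)$ the in-degree is $k-1$ if the label is left-sns and $k$ otherwise, and the out-degree is $k-1$ if the label is right-sns and $k$ otherwise. So in each case it suffices to decide whether $\mathbf{a}$ is left-sns and whether it is right-sns, and then check the parity of $k$ or $k-1$ accordingly.

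For case (i), $k$ is odd, so $k$ is odd and $k-1$ is even. We therefore need $\mathbf{a}$ to be neither left-sns nor right-sns. This is exactly what Lemma~\ref{lemma:ns-symmetric_not_symmetric} gives: a negasymmetric $(n-1)$-tuple that is also left-sns (or also right-sns) must be uniform. Since $\mathbf{a}$ is negasymmetric and non-uniform, it is neither, and both degrees equal $k$, which is odd.

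For cases (ii) and (iii), $k$ is even, so $k-1$ is odd. We therefore need $\mathbf{a}$ to be both left-sns and right-sns. This is the ``further'' clause of Lemma~\ref{lemma:combining_lns-rns-symmetric}. For $n$ odd and $k$ even, the uniform or alternating tuples with entries in $\{0,k/2\}$ are both left-sns and right-sns. For $n$ even, every uniform-alternating tuple is both left-sns and right-sns. In both cases, both degrees equal $k-1$, which is odd.

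The main obstacle is the range of $n$. Lemma~\ref{lemma:combining_lns-rns-symmetric} is stated only for $n\geq4$, and Lemma~\ref{lemma:B-_vertex_degree_k-1} only for $n\geq3$, while the present statement allows $n\geq2$. I will handle the small cases directly.

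For $n=2$, the vertices are $1$-tuples $(a)$. Exactly one in-edge, namely $(-a,a)$, is negasymmetric, and exactly one out-edge, namely $(a,-a)$, is negasymmetric. Hence every vertex has in- and out-degree $k-1$. Case (i) is vacuous here, since the only negasymmetric $1$-tuple is $(0)$ when $k$ is odd, and it is uniform. Case (ii) requires $n$ odd and so does not apply. In case (iii), $k-1$ is odd, as required.

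For $n=3$, only cases (i) and (ii) can arise. Case (i) already works via Lemma~\ref{lemma:B-_vertex_degree_k-1} and Lemma~\ref{lemma:ns-symmetric_not_symmetric}, since neither requires $n\geq4$. In case (ii), $\mathbf{a}=(a_0,a_1)$ with $a_0,a_1\in\{0,k/2\}$. Being left-sns means $a_0=-a_0$, and being right-sns means $a_1=-a_1$, and both hold trivially. So the degrees are $k-1$, which is odd.

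Finally, I will check that the argument behind the ``further'' clause of Lemma~\ref{lemma:combining_lns-rns-symmetric} does not actually depend on $n\geq4$. It is a direct verification from the definitions of left-sns and right-sns, so it covers any remaining boundary cases.
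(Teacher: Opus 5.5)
Your proposal is correct and is essentially the paper's proof. You read the degrees off Lemma~\ref{lemma:B-_vertex_degree_k-1}, use Lemma~\ref{lemma:ns-symmetric_not_symmetric} to exclude left-sns and right-sns in case (i), and use the ``further'' clause of Lemma~\ref{lemma:combining_lns-rns-symmetric} to get both properties in cases (ii) and (iii). You also treat $n=2$ and $n=3$ directly, where the hypotheses $n\geq3$ and $n\geq4$ of the cited lemmas fail, and plan to verify that converse clause (which the paper asserts without proof) from the definitions; this is care the paper omits, and your small-case computations are right.
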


\begin{proof}
\begin{itemize}
\item[i)] Since $\mathbf{a}$ is negasymmetric but not uniform, by
    Lemma~\ref{lemma:ns-symmetric_not_symmetric} is cannot be left-sns or right-sns.
     Thus, by Lemma~\ref{lemma:B-_vertex_degree_k-1} it has in-degree and out-degree $k$, and
     since $k$ is odd the result follows.
\item[ii)] Since $\mathbf{a}$ is either uniform or alternating, where $a_i\in\{0,k/2\}$ for
    every $i$, by Lemma~\ref{lemma:combining_lns-rns-symmetric}(ii) it is both left-sns and
    right-sns. Thus, by Lemma~\ref{lemma:B-_vertex_degree_k-1} it has in-degree and out-degree
    $k-1$, and since $k$ is even the result follows.
\item[iii)] Since $\mathbf{a}$ is uniform-alternating, by
    Lemma~\ref{lemma:combining_lns-rns-symmetric}(iii) it is both left-sns and right-sns. Thus,
    by Lemma~\ref{lemma:B-_vertex_degree_k-1} it has in-degree and out-degree $k-1$. Since $k$
    is even the result follows.

\end{itemize}
\end{proof}

Combining Lemmas~\ref{lemma:NOS_negasymmetric_vertex_even_degree} and \ref{lemma:B-vertex-parity}
immediately gives the following important result.

\begin{corollary}  \label{corollary:excluded-tuples-by-parity}
Suppose $k>2$ and $n\geq2$ and $S$ is an $\mathcal{NOS}_k(n)$. Then in all the following cases
there are edges $(a_0,a_1,\ldots,a_{n-2},x)$ and $(y,a_0,a_1,\ldots,a_{n-2})$ in $B^-_k(n-1)$
that are not in $B^-(S,n)$.
\begin{itemize}
\item[i)] if $k$ is odd, for every vertex in $B^-_k(n-1)$ corresponding to a negasymmetric
    non-uniform $(n-1)$-tuple $(a_0,a_1,\ldots,a_{n-2})$;
\item[ii)] if $n$ is odd and $k$ is even, for every vertex in $B^-_k(n-1)$ corresponding to a
    uniform or alternating negasymmetric $(n-1)$-tuple $(a_0,a_1,\ldots,a_{n-2})$, where
    $a_i\in\{0,k/2\}$ for every $i$;
\item[iii)] if $n$ and $k$ are both even, for every vertex in $B^-_k(n-1)$ corresponding to a
    uniform-alternating negasymmetric $(n-1)$-tuple $(a,-a,a,-a,\dots,a)$.
\end{itemize}
\end{corollary}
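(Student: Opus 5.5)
The plan is to combine Lemma~\ref{lemma:NOS_negasymmetric_vertex_even_degree} with Lemma~\ref{lemma:B-vertex-parity} through a parity comparison. Fix one of the three cases and let $\mathbf{a}$ be a vertex label of the stated type. First I will check that $\mathbf{a}$ is negasymmetric, so that Lemma~\ref{lemma:NOS_negasymmetric_vertex_even_degree} applies. In case (i) this is assumed. In case (ii) every entry lies in $\{0,k/2\}$, so $-a_i=a_i$. For the uniform tuples this gives negasymmetry immediately. For the alternating tuples, $n-1$ is even, so $a_i$ and $a_{n-2-i}$ have opposite index parity; this is the point to check carefully, since the claim then requires $c_0=-c_1$, i.e.\ $c_0=c_1$ modulo the fact that each value is its own negative. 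Since the statement explicitly says ``negasymmetric'', I will read it as restricting to those tuples which are negasymmetric. In case (iii) $n-1$ is odd, and $(a,-a,\ldots,a)$ satisfies $a_{n-2-i}=(-1)^{n-2-i}a=(-1)^{i}a=-a_i$ up to the global sign pattern. Again I will simply use the stated negasymmetry hypothesis.

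Next, Lemma~\ref{lemma:NOS_negasymmetric_vertex_even_degree} gives that $\mathbf{a}$ has even in-degree and even out-degree in $B^-(S,n)$. Lemma~\ref{lemma:B-vertex-parity}, in its matching case (i), (ii) or (iii), gives that $\mathbf{a}$ has odd in-degree and odd out-degree in $B^-_k(n-1)$. By Lemma~\ref{lemma:B-Sn_basic_properties}(iii), $B^-(S,n)$ is a subgraph of $B^-_k(n-1)$, so the out-edges of $\mathbf{a}$ in $B^-(S,n)$ form a subset of its out-edges in $B^-_k(n-1)$. A subset of even size cannot equal a set of odd size, so at least one out-edge $(a_0,\ldots,a_{n-2},x)$ of $B^-_k(n-1)$ is missing from $B^-(S,n)$. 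The same argument applied to in-edges yields a missing edge $(y,a_0,\ldots,a_{n-2})$.

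One minor point needs care. If the vertex $\mathbf{a}$ does not appear in $B^-(S,n)$ at all, its degrees there are $0$, which is even, so the argument is unchanged. Here I am treating $B^-(S,n)$ as having all vertices of $B_k(n-1)$, as in its definition.

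The only real obstacle is the bookkeeping in case (ii): confirming that the hypotheses of Lemma~\ref{lemma:B-vertex-parity}(ii) hold for exactly the tuples listed, and that those tuples are negasymmetric. I will handle it by noting that the corollary's hypothesis includes negasymmetry, while the lemma's hypothesis (uniform or alternating with entries in $\{0,k/2\}$) is implied by the corollary's. With that settled, the rest is a direct parity contradiction.
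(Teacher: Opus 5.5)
Your proposal is correct and matches the paper's argument: the paper obtains the corollary directly by combining the even-degree lemma for negasymmetric vertices in $B^-(S,n)$ with the odd-degree lemma in $B^-_k(n-1)$, using that $B^-(S,n)$ is a subgraph of $B^-_k(n-1)$. Your side check in case (iii) is wrong, but harmless: for odd length $n-1$ one gets $a_{n-2-i}=a_i$, so negasymmetry forces $a\in\{0,k/2\}$. Likewise, as you half-noticed, no alternating tuple in case (ii) is negasymmetric, and since your argument relies only on the stated negasymmetry hypothesis, neither point affects it.
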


\subsection{Constraint interactions}  \label{subsection:NOS-constraint-interactions}

The results above indicate in what circumstances certain edges in $B^-_k(n-1)$ cannot occur in
$B^-(S,n)$ if $S$ is an $\mathcal{NOS}_k(n)$.  In particular we have considered cases where one of
the incoming edges to a vertex corresponding to a negasymmetric or right-sns $(n-1)$-tuple cannot
occur in $B^-(S,n)$, and also where one of the outgoing edges from a vertex corresponding to a
negasymmetric or left-sns $(n-1)$-tuple cannot occur in $B^-(S,n)$. While we would like to add
together the numbers of eliminated edges for each case, we need to ensure we avoid `double
counting'.

More specifically, we need to consider when an edge can be outgoing from a negasymmetric or
left-sns $(n-1)$-tuple and also incoming to a negasymmetric or right-sns $(n-1)$-tuple.  This
motivates the following result.

\begin{lemma}  \label{lemma:NOS-constraint-interactions}
Suppose $k>2$.  Suppose $\mathbf{a}=(a_0,a_1,\dots,a_{n-2})$ and
$\mathbf{b}=(a_1,a_2,\dots,a_{n-1})$ are $k$-ary $(n-1)$-tuples.
\begin{itemize}
\item[i)] If $n\geq3$, and $\mathbf{a}$ and $\mathbf{b}$ are both negasymmetric then
    \begin{itemize}
    \item if $n$ is even then $\mathbf{a}$ and $\mathbf{b}$ are both uniform or
        alternating, where $a_i\in\{0,k/2\}$ for every $i$;
    \item if $n$ is odd then $\mathbf{a}$ and $\mathbf{b}$ are uniform-alternating.
    \end{itemize}

\item[ii)] If $n\geq5$, $\mathbf{a}$ is negasymmetric and $\mathbf{b}$ is right-sns then there
    exist $c_j$, $0\leq j\leq2$, such that $a_{3i+j}=c_j$, for every $i$ and $0\leq j\leq 2$.
    Moreover if $n\equiv0\pmod3$ then $c_2\in\{0,k/2\}$ and $c_0=-c_1$; if $n\equiv1\pmod3$
    then $c_1\in\{0,k/2\}$ and $c_0=-c_2$; and if $n\equiv2\pmod3$ then $c_0\in\{0,k/2\}$ and
    $c_1=-c_2$.

\item[iii)] If $n\geq5$, $\mathbf{a}$ is left-sns and $\mathbf{b}$ is negasymmetric then there
    exist $c_j$, $0\leq j\leq2$, such that $a_{3i+j}=c_j$, for every $i$ and $0\leq j\leq 2$.
    Moreover if $n\equiv0\pmod3$ then $c_0\in\{0,k/2\}$ and $c_1=-c_2$; if $n\equiv1\pmod3$
    then $c_2\in\{0,k/2\}$ and $c_0=-c_1$; and if $n\equiv2\pmod3$ then $c_1\in\{0,k/2\}$ and
    $c_0=-c_2$.

\item[iv)] If $n\geq5$, $\mathbf{a}$ is left-sns and $\mathbf{b}$ is right-sns then there exist
    $c_j$, $0\leq j\leq3$, such that $a_{4i+j}=c_j$, for every $i$ and $0\leq j\leq 3$.
    Moreover if $n\equiv0\pmod4$ then $c_0=-c_1$ and $c_2=-c_3$; if $n\equiv1\pmod4$ then
    $c_0=-c_2$, $c_1\in\{0,k/2\}$, and $c_3\in\{0,k/2\}$; if $n\equiv2\pmod4$ then $c_0=-c_3$
    and $c_1=-c_2$; and if $n\equiv3\pmod4$ then $c_1=-c_3$, $c_0\in\{0,k/2\}$, and
    $c_2\in\{0,k/2\}$.
\end{itemize}
\end{lemma}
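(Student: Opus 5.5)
The plan is to view each hypothesis as an index \emph{reflection} identity on the single $n$-tuple $(a_0,a_1,\ldots,a_{n-1})$, of the form $a_i=-a_{c-i}$ on a suitable range of $i$. Composing two such reflections with centres $c$ and $c'$ gives $a_i=a_{i+(c'-c)}$, which is a periodicity. Once the period $p$ is known, I will read off the constants $c_j$ from the fixed points and paired points of the reflection modulo $p$.

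First I translate each hypothesis into this form. $\mathbf{a}$ negasymmetric means $a_i=-a_{n-2-i}$ for $0\le i\le n-2$. $\mathbf{a}$ left-sns means $a_i=-a_{n-3-i}$ for $0\le i\le n-3$. Writing $b_j=a_{j+1}$, $\mathbf{b}$ negasymmetric means $a_i=-a_{n-i}$ for $1\le i\le n-1$. $\mathbf{b}$ right-sns means $b_j=-b_{n-1-j}$ for $1\le j\le n-2$, that is, $a_i=-a_{n+1-i}$ for $2\le i\le n-1$. Composing the two reflections in each part gives a shift equal to the difference of the centres:
\begin{itemize}
\item[(i)] centres $n-2$ and $n$, giving $a_i=a_{i+2}$;
\item[(ii)] centres $n-2$ and $n+1$, giving $a_i=a_{i+3}$;
\item[(iii)] centres $n-3$ and $n$, giving $a_i=a_{i+3}$;
\item[(iv)] centres $n-3$ and $n+1$, giving $a_i=a_{i+4}$.
\end{itemize}
For instance, in (ii), $a_i=-a_{n+1-i}$ and $a_{n+1-i}=-a_{n-2-(n+1-i)}=-a_{i-3}$, so $a_i=a_{i-3}$. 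I must check that every index used lies in both admissible ranges. The hypotheses $n\ge3$ in (i) and $n\ge5$ in (ii)--(iv) should guarantee that the chain of equalities $a_j=a_{j+p}$ holds for every $j$ with $0\le j,\,j+p\le n-1$. Since the periodicity then covers the whole tuple, this gives $a_{pi+j}=c_j$.

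Next, I substitute the periodic form back into one reflection, say $a_i=-a_{c-i}$, and reduce modulo $p$. This gives the relation $c_j=-c_{(c-j)\bmod p}$ on residues, an involution on $\mathbb{Z}_p$. A fixed residue $j\equiv c-j$ forces $c_j=-c_j$, so $c_j\in\{0,k/2\}$ (only $0$ if $k$ is odd). A residue paired with a different one gives $c_j=-c_{j'}$.
\begin{itemize}
\item For $p=3$, $c=n-2$ in (ii): the fixed residue satisfies $2j\equiv n-2\pmod 3$, giving $j=2,1,0$ for $n\equiv0,1,2\pmod 3$. The other two residues are swapped, which matches the stated conditions.
\item Part (iii) is the same computation with $c=n-3$.
\item For $p=4$, $c=n-3$ in (iv): when $n$ is even there are no fixed residues and two swapped pairs. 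When $n$ is odd there are two fixed residues and one swapped pair. Checking $n\bmod 4$ gives the four listed cases.
\item For $p=2$ in (i), $c=n-2$: when $n$ is even both residues are fixed, giving $c_0,c_1\in\{0,k/2\}$, i.e.\ uniform or alternating with entries in $\{0,k/2\}$. When $n$ is odd the two residues are swapped, giving $c_1=-c_0$, i.e.\ uniform-alternating.
\end{itemize}
The same conclusions then transfer to $\mathbf{b}$, because it is a shifted window of the same periodic sequence.

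The main obstacle is the index-range bookkeeping in the first step. The composed identity is only valid where both reflections apply, so I must verify that this overlap is large enough to link all positions $0,\ldots,n-1$. The worst case is $p=4$ with $n=5$. There I would list the valid shifts explicitly to confirm that $a_0=a_4$ and the residue relations still hold, and that the relations coming from the reflection alone supply any equalities the composition misses. I also need to check in each case that every residue class mod $p$ actually occurs among the indices where the reflection identity is asserted, so that the constraints on the $c_j$ are genuinely forced.
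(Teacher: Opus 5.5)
Your proposal is correct and follows the paper's proof. Like the paper, you compose the two reflection identities to get period $2$, $3$, $3$ or $4$, then read off the constraints on the $c_j$; your involution $j\mapsto c-j$ on $\mathbb{Z}_p$ just packages the paper's case-by-case index checks. The bookkeeping you worry about is harmless, since the composition already gives $a_i=a_{i-p}$ for every $p\le i\le n-1$. The one exception is (iv) with $n=5$: there the left-sns reflection never touches residue $3$, so you must also use the right-sns reflection $a_i=-a_{n+1-i}$, which induces the same involution mod $4$. This is exactly how the paper obtains $c_3\in\{0,k/2\}$.
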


\begin{proof}
\begin{itemize}
\item[i)] By negasymmetry of $\mathbf{a}$ and $\mathbf{b}$, respectively, we have
    $a_i=-a_{n-2-i}$ and $a_{i+1}=-a_{n-1-i}$ for every $i$, $0\leq i\leq n-2$.  Hence there
    exist $c_0$ and $c_1$ such that $a_{2i}=c_0$ and $a_{2i+1}=c_1$ for every $i$.
     \newline If $n$ is even then from the first equation we have
    $a_{n/2-1}=-a_{n/2-1}$, i.e.\ $a_{n/2-1}\in\{0,k/2\}$, and from the second equation we have
    $a_{n/2}=-a_{n/2}$, i.e.\ $a_{n/2}\in\{0,k/2\}$; the result follows.
     \newline If $n$ is odd then from the first equation we have $a_{(n-1)/2-1}=-a_{(n-1)/2-2}$,
    i.e.\ $c_0=-c_1$, and the result follows.

\item[ii)] By negasymmetry of $\mathbf{a}$, $a_i=-a_{n-2-i}$ for every $i$ ($0\leq i\leq n-2$),
    and by right-semi-negasymmetry of $\mathbf{b}$, $a_j=-a_{n+1-j}$ for every $j$ ($2\leq
    j\leq n-1$). Hence $a_i=a_{i+3}$, $0\leq i\leq n-3$. Now, since $n\geq5$, $a_{3i+j}=c_j$,
    for every $i$ and $0\leq j\leq 2$, for some $c_j$.
     \newline If $n\equiv0\pmod3$ then, by negasymmetry, $c_0=a_0=-a_{n-2}=-c_1$, i.e.\ $c_0=-c_1$,
    and by right-semi-negasymmetry we have $c_2=a_2=-a_{n-1}=-c_2$, i.e.\ $c_2\in\{0,k/2\}$.
     \newline If $n\equiv1\pmod3$ then, by negasymmetry, $c_0=a_0=-a_{n-2}=-c_2$ and so $c_0=-c_2$, and
     again by negasymmetry $c_1=a_1=-a_{n-3}=-c_1$, i.e.\ $c_1\in\{0,k/2\}$.
     \newline If $n\equiv2\pmod3$ then, by negasymmetry, $c_1=a_1=-a_{n-3}=-c_2$, i.e.\
     $c_1=-c_2$, and again by negasymmetry $c_0=a_0=-a_{n-2}=-c_0$, i.e.\ $c_0\in\{0,k/2\}$.

\item[iii)] By left-semi-negasymmetry of $\mathbf{a}$, $a_i=-a_{n-3-i}$ for every $i$ ($0\leq
    i\leq n-3$), and by negasymmetry of $\mathbf{b}$, $a_j=-a_{n-j}$ for every $j$ ($1\leq
    j\leq n-1$). Using exactly the same argument as for (ii), it follows that $a_{3i+j}=c_j$,
    for every $i$ and $0\leq j\leq 2$, for some $c_j$.
     \newline If $n\equiv0\pmod3$ then, by negasymmetry, $c_1=a_1=-a_{n-1}=-c_2$ and so $c_1=-c_2$,
    and by left-semi-negasymmetry $c_0=a_0=-a_{n-3}=-c_0$, i.e.\ $c_0\in\{0,k/2\}$.
     \newline If $n\equiv1\pmod3$ then, by negasymmetry, $c_1=a_1=-a_{n-1}=-c_0$ and so $c_1=-c_0$,
    and again by negasymmetry $c_2=a_2=-a_{n-2}=-c_2$, i.e.\ $c_2\in\{0,k/2\}$.
     \newline If $n\equiv2\pmod3$ then, by left-semi-negasymmetry, $c_0=a_0=-a_{n-3}=-c_2$ and so
    $c_0=-c_2$, and again by negasymmetry $c_1=a_1=-a_{n-1}=-c_1$, i.e.\ $c_1\in\{0,k/2\}$.

\item[iv)] By left-semi-negasymmetry of $\mathbf{a}$, $a_i=-a_{n-3-i}$ for every $i$ ($0\leq
    i\leq n-3$), and by right-semi-negasymmetry of $\mathbf{b}$, $a_{j}=-a_{n+1-j}$ for every
    $j$ ($2\leq j\leq n-1$).  Hence $a_i=a_{i+4}$, $0\leq i\leq n-3$.  Thus, since $n\geq5$,
    $a_{4i+j}=c_j$, for every $i$ and $0\leq j\leq 3$, for some $c_j$.
     \newline If $n\equiv0\pmod4$ then, by left-semi-negasymmetry, $c_0=a_0=-a_{n-3}=-c_1$, and so
    $c_0=-c_1$, and by right-semi-negasymmetry, $c_2=a_2=-a_{n-1}=-c_3$, and so $c_2=-c_3$.
     \newline If $n\equiv1\pmod4$ then, by left-semi-negasymmetry, $c_0=a_0=-a_{n-3}=-c_2$, by
    left-semi-negasymmetry, $c_1=a_1=-a_{n-4}=-c_1$, i.e.\ $c_1\in\{0,k/2\}$, and by
    right-semi-negasymmetry, $c_3=a_3=-a_{n-2}=-c_3$, and so $c_3\in\{0,k/2\}$.
     \newline If $n\equiv2\pmod4$ then, by left-semi-negasymmetry, $c_0=a_0=-a_{n-3}=-c_3$, and so
    $c_0=-c_3$, and by right-semi-negasymmetry, $c_2=a_2=-a_{n-1}=-c_1$, and so $c_2=-c_1$.
     \newline If $n\equiv3\pmod4$ then, by left-semi-negasymmetry, $c_1=a_1=-a_{n-4}=-c_3$, by
    left-semi-negasymmetry, $c_0=a_0=-a_{n-3}=-c_0$, i.e.\ $c_0\in\{0,k/2\}$, and by
    right-semi-negasymmetry, $c_2=a_2=-a_{n-1}=-c_2$, and so $c_2\in\{0,k/2\}$.

\end{itemize}
\end{proof}

\subsection{Developing the bound}

Before establishing our new period bound, we first outline our strategy, enabling the proof of the
bound to be described more simply.

In Sections~\ref{subsection:NOS_in-out-constraints} and
\ref{subsection:NOS-degree-parity-constraints} we described two ways of showing that certain edges
in $B^-_k(n-1)$ cannot occur in $B^-(S,n)$ when $S$ is a $\mathcal{NOS}_k(n)$.  In particular we
showed that in two cases incoming edges to certain categories of vertex cannot occur, and also in
two cases that outgoing edges from certain categories of vertex cannot occur.

This suggests a straightforward strategy for bounding the period of an $\mathcal{NOS}_k(n)$, namely
that the period is at most half the maximum cardinality of $B^-(S,n)$ (by
Lemma~\ref{lemma:B-Sn_basic_properties}(i)).  In turn $|B^-(S,n)|$ is bounded above by the number
of edges in $B^-_k(n-1)$, i.e.\ $N_k(n)$, as specified in
Lemma~\ref{lemma:number_non-negasymmetric_tuples}, less the number of edges in $B^-_k(n-1)$ that
cannot occur in $B^-(S,n)$, as specified in
Corollaries~\ref{corollary:NOS_excluded_tuples_by_unequal_degree} and
\ref{corollary:excluded-tuples-by-parity}.

This strategy is complicated by the fact that, as noted in
Section~\ref{subsection:NOS-constraint-interactions}, there is a danger of `double counting'
certain excluded edges.  For example,
Corollary~\ref{corollary:NOS_excluded_tuples_by_unequal_degree} asserts that certain outgoing edges
from a left-sns $(n-1)$-tuple cannot occur, and Corollary~\ref{corollary:excluded-tuples-by-parity}
asserts that certain edges incoming to a uniform or negasymmetric $(n-1)$ tuple cannot occur. These
two sets of excluded edges may overlap, and hence we need to take this into account;
Lemma~\ref{lemma:NOS-constraint-interactions} is of key importance in this respect.

The following notation is intended to simplify the arguments.  Let $U_{\text{in}}$ and
$U_{\text{out}}$ be the sets of incoming and outgoing edges excluded by
Corollary~\ref{corollary:NOS_excluded_tuples_by_unequal_degree}, and $P_{\text{in}}$ and
$P_{\text{out}}$ be the sets of incoming and outgoing edges excluded by
Corollary~\ref{corollary:excluded-tuples-by-parity}.

The above discussion leads to the following key lemma.

\begin{lemma}  \label{lemma:NOS-bounds-high-level}
If $k>2$ and $S$ is an $\mathcal{NOS}_k(n)$ then:
\[ |B^-(S,n)|\leq \begin{cases}
N_k(2) - |P_{\text{out}}|,
 & \text{if $n=2$} \\
N_k(3) - |P_{\text{out}}|-|P_{\text{in}}| + |P_{\text{out}}\cap P_{\text{in}}|,
 & \text{if $n=3$} \\
N_k(4) - |U_{\text{out}}|-|P_{\text{out}}|,
 & \text{if $n=4$} \\
N_k(n) - |U_{\text{out}}|-|U_{\text{in}}|-|P_{\text{out}}|-|P_{\text{in}}| \\
~~~~+~|U_{\text{out}}\cap P_{\text{in}}|+|P_{\text{out}}\cap U_{\text{in}}| \\
~~~~+~|U_{\text{out}}\cap U_{\text{in}}|+|P_{\text{out}}\cap P_{\text{in}}|,
 & \text{if $n>4$.}
\end{cases} \]
where $|X\cap Y|$ denotes the maximum possible cardinality for such a set, and $N_k(n)$ denotes the
number of non-negasymmetric $k$-ary $n$-tuples (see
Lemma~\ref{lemma:number_non-negasymmetric_tuples}).
\end{lemma}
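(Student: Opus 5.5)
The plan is inclusion--exclusion. Let $E$ be the set of edges of $B^-_k(n-1)$ that cannot lie in $B^-(S,n)$. By Lemma~\ref{lemma:B-Sn_basic_properties}(iii), $B^-(S,n)\subseteq B^-_k(n-1)$, so $|B^-(S,n)|\leq N_k(n)-|E|$. The sets $U_{\text{out}},U_{\text{in}},P_{\text{out}},P_{\text{in}}$ are defined by choosing, for each qualifying vertex, one specific excluded edge as guaranteed by Corollaries~\ref{corollary:NOS_excluded_tuples_by_unequal_degree} and \ref{corollary:excluded-tuples-by-parity}. Each is a subset of $E$, so $|E|\geq|U_{\text{out}}\cup U_{\text{in}}\cup P_{\text{out}}\cup P_{\text{in}}|$.

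First I check which sets are available for each $n$. Corollary~\ref{corollary:NOS_excluded_tuples_by_unequal_degree} needs $n\geq4$, so for $n=2,3$ only $P_{\text{out}},P_{\text{in}}$ exist. For $n=2$ the vertices are single symbols, and I plan to use only $P_{\text{out}}$, which yields a valid (weaker) bound. For $n=4$ I likewise use only $U_{\text{out}}$ and $P_{\text{out}}$. I then need these two to be disjoint, and I argue this from the tails of the edges. Edges in $U_{\text{out}}$ leave non-uniform (non-uniform-alternating) left-sns vertices; edges in $P_{\text{out}}$ leave negasymmetric vertices of the specific types in Corollary~\ref{corollary:excluded-tuples-by-parity}. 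Each vertex contributes a single chosen edge, so the only possible overlap is at a vertex that is both left-sns and negasymmetric. By Lemma~\ref{lemma:ns-symmetric_not_symmetric} such a vertex is uniform, and uniform vertices are excluded from $U$. Hence $|U_{\text{out}}\cup P_{\text{out}}|=|U_{\text{out}}|+|P_{\text{out}}|$.

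For $n=3$ and $n>4$ I apply inclusion--exclusion to the union. The same tail argument gives $U_{\text{out}}\cap P_{\text{out}}=\emptyset$, and the head argument with the right-sns half of Lemma~\ref{lemma:ns-symmetric_not_symmetric} gives $U_{\text{in}}\cap P_{\text{in}}=\emptyset$. Triple and quadruple intersections are then empty, since each would contain one of these two pairs. What remains is the four cross terms, $U_{\text{out}}\cap P_{\text{in}}$, $P_{\text{out}}\cap U_{\text{in}}$, $U_{\text{out}}\cap U_{\text{in}}$ and $P_{\text{out}}\cap P_{\text{in}}$, which are added back. Replacing each by its maximum possible cardinality only weakens the bound, since they enter with a plus sign. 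For $n=3$ only $P_{\text{out}}\cap P_{\text{in}}$ survives.

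The main obstacle is the disjointness claims. They depend on the excluded edge being chosen canonically per vertex, and on a vertex never being counted under both $U$ and $P$ on the same side. In Corollary~\ref{corollary:excluded-tuples-by-parity}(ii),(iii), and in the non-uniform alternating cases, I would verify this via Lemma~\ref{lemma:combining_lns-rns-symmetric}. The hypotheses of Corollary~\ref{corollary:NOS_excluded_tuples_by_unequal_degree} exclude exactly the tuples that are both left-sns and right-sns (for example, the alternating $\{0,k/2\}$ ones when $n$ is odd and $k$ is even). So the tuples that are both negasymmetric and left-sns lie in $P$ only.
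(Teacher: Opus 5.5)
Your proposal is correct and follows the paper's proof. The paper likewise rests everything on $U_{\text{out}}\cap P_{\text{out}}=U_{\text{in}}\cap P_{\text{in}}=\emptyset$, obtained from Lemma~\ref{lemma:ns-symmetric_not_symmetric} together with the non-uniformity built into $U_{\text{out}}$ and $U_{\text{in}}$; you just make explicit the inclusion--exclusion bookkeeping the paper leaves implicit (one chosen edge per vertex, vanishing triple and quadruple intersections, replacing the cross terms by their maxima only weakening the bound). Your closing appeal to Lemma~\ref{lemma:combining_lns-rns-symmetric} is not needed for the disjointness, and uniform negasymmetric tuples need not lie in $P$ at all (e.g.\ for $k$ odd), but this does not affect the argument.
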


\begin{proof}
The argument for $n=2$ is immediate. A similar comment applies when $n=3$.  The $n=4$ and $n \ge 5$
cases follow from observing that $U_{\text{in}}\cap P_{\text{in}}=U_{\text{out}}\cap
P_{\text{out}}=\emptyset$ because a negasymmetric $(n-1)$-tuple can neither be non-uniform left-sns
nor non-uniform right-sns, from Lemma~\ref{lemma:ns-symmetric_not_symmetric}.
\end{proof}

\begin{remark}
The reason to restrict the sets considered when $n\leq 4$ is because certain sets are empty or
might be equal for small $n$.  Moreover
Corollary~\ref{corollary:NOS_excluded_tuples_by_unequal_degree} only applies for $n\geq4$.
\end{remark}

\subsection{The bound}

We can now give the main result.

\begin{theorem}  \label{theorem:new_NOS_bounds}
Suppose $k>2$ and $S=(s_i)$ is an $\mathcal{NOS}_k(n)$ of period $m$.
\begin{itemize}
\item[i)] If $n=2$ then:
\[ m \leq \begin{cases}
\frac{k^2-k}{2}, & \text{if $k$ is odd} \\
\frac{k^2-k-2}{2}, & \text{if $k$ is even}
\end{cases} \]
\item[ii)] If $n=3$ then:
\[ m \leq \begin{cases}
\frac{k^3-2k+1}{2}, & \text{if $k$ is odd} \\
\frac{k^3-2k-6}{2}, & \text{if $k$ is even}
\end{cases} \]
\item[iii)] If $n=4$ then:
\[ m \leq \begin{cases}
\frac{k^4-2k^2+1}{2}, & \text{if $k$ is odd} \\
\frac{k^4-2k^2+k-2}{2}, & \text{if $k$ is even}
\end{cases}\]
\item[iv)] If $n>4$ then:
\[ m \leq \begin{cases}
\frac{k^n-5k^{(n-1)/2}+4k}{2}, & \text{if $n$ and $k$ are odd} \\
\frac{k^n-6k^{(n-1)/2}+3k+2}{2}, & \text{if $n$ is odd and $k$ is even} \\
\frac{k^n-3k^{n/2}-2k^{(n-2)/2}+k^2+3k}{2}, & \text{if $n$ is even and $k$ is odd} \\
\frac{k^n-3k^{n/2}+k^2+k-2}{2}, & \text{if $n$ and $k$ are even.}
\end{cases}\]
\end{itemize}
\end{theorem}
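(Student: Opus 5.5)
The plan is to apply Lemma~\ref{lemma:NOS-bounds-high-level} case by case and then use $m=|B^-(S,n)|/2$ from Lemma~\ref{lemma:B-Sn_basic_properties}(i). In each case I would substitute $N_k(n)$ from Lemma~\ref{lemma:number_non-negasymmetric_tuples} and evaluate each of $|U_{\text{out}}|,|U_{\text{in}}|,|P_{\text{out}}|,|P_{\text{in}}|$. By Corollaries~\ref{corollary:NOS_excluded_tuples_by_unequal_degree} and \ref{corollary:excluded-tuples-by-parity}, each excluded edge is associated with exactly one vertex of the relevant type, so each of these sets has the same size as the corresponding set of vertex labels. Those sizes I would read off from Lemma~\ref{lemma:NOS_tuple_numbers} applied to $(n-1)$-tuples:
\begin{itemize}
\item[$\bullet$] $U$ uses parts (ix), (x) or (xi), according to the parity of $n$ and $k$;
\item[$\bullet$] $P$ uses part (i) minus part (v) when $k$ is odd;
\item[$\bullet$] for $n$ odd and $k$ even, $P$ counts the uniform or alternating tuples with entries in $\{0,k/2\}$, i.e.\ $4$ of them;
\item[$\bullet$] for $n,k$ both even, $P$ counts the uniform-alternating tuples, i.e.\ $k$ of them.
\end{itemize}
For $n=2,3,4$ only the sets listed in Lemma~\ref{lemma:NOS-bounds-high-level} are counted, with small tuples enumerated by hand (for $n=2$ the vertices are single symbols, and the negasymmetric ones are $0$ and possibly $k/2$).

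The intersection terms are bounded using Lemma~\ref{lemma:NOS-constraint-interactions}. An edge lies in $X_{\text{out}}\cap Y_{\text{in}}$ exactly when its $n$-tuple $(a_0,\dots,a_{n-1})$ has its prefix $\mathbf{a}$ of the out-type and its suffix $\mathbf{b}$ of the in-type.
\begin{itemize}
\item[$\bullet$] For $P_{\text{out}}\cap P_{\text{in}}$, part (i) forces a period-2 pattern.
\item[$\bullet$] For $U_{\text{out}}\cap P_{\text{in}}$ and $P_{\text{out}}\cap U_{\text{in}}$, parts (iii) and (ii) force a period-3 pattern with one free parameter and one coordinate in $\{0,k/2\}$.
\item[$\bullet$] For $U_{\text{out}}\cap U_{\text{in}}$, part (iv) forces a period-4 pattern.
\end{itemize}
In each case I would count the admissible parameter choices. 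From that count I would remove those that make $\mathbf{a}$ or $\mathbf{b}$ uniform, alternating or uniform-alternating, so that the vertex is genuinely of the excluded type. I would also remove those for which the $n$-tuple itself is negasymmetric, since such an edge is not in $B^-_k(n-1)$.

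As a check, for $n$ and $k$ both odd with $n>4$: $|U_{\text{out}}|=|U_{\text{in}}|=|P_{\text{out}}|=|P_{\text{in}}|=k^{(n-1)/2}-1$. Each of the four intersections has size at most $k-1$, since the patterns have one free value $c$ with the others equal to $0$ or $-c$, and $c=0$ gives the uniform tuple. This yields $k^n-k^{(n-1)/2}-4(k^{(n-1)/2}-1)+4(k-1)=k^n-5k^{(n-1)/2}+4k$, which matches the stated bound after halving. The other parities go the same way, with $k/2$ contributing extra choices when $k$ is even, and with the $n\bmod 3$ and $n\bmod 4$ sub-cases needing to be combined.

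The main obstacle I expect is the intersection bookkeeping for $k$ even. There the options $0$ and $k/2$ multiply the number of candidate patterns, the definitions of $U$ (excluding alternating or uniform-alternating vertices) interact with the period-3 and period-4 structures, and the residue of $n$ modulo $3$ and $4$ must be tracked. Since only upper bounds on the intersections are needed, I would first bound each intersection crudely by the parameter count from Lemma~\ref{lemma:NOS-constraint-interactions}. I would then tighten only where the stated constants require it, for instance the $+3k+2$ and $+k^2+3k$ terms, which suggest that some intersections have size about $k^2$ for $n$ even. Finally, I would check the small cases $n=3,4$ separately, since Lemma~\ref{lemma:NOS-constraint-interactions}(ii)--(iv) need $n\ge5$.
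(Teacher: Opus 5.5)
Your strategy is the paper's own: apply Lemma~\ref{lemma:NOS-bounds-high-level} case by case, take set sizes from Lemma~\ref{lemma:NOS_tuple_numbers}, and bound intersections with Lemma~\ref{lemma:NOS-constraint-interactions}. Your treatment of odd $k$, including the worked check for $n,k$ odd, matches it.

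\textbf{The gap.} It lies in the parity sets for even $k$: you have dropped the word \emph{negasymmetric} from Corollary~\ref{corollary:excluded-tuples-by-parity}(ii),(iii). Lemma~\ref{lemma:NOS_negasymmetric_vertex_even_degree} constrains only negasymmetric vertices, because only for those do $S$ and $-S^R$ pass through the vertex equally often. Odd degree in $B^-_k(n-1)$ at any other vertex forces nothing.
\begin{itemize}
\item[$\bullet$] For $n,k$ even, the uniform-alternating $(n-1)$-tuple $(c,-c,\dots,c)$ has odd length and is negasymmetric only when $c=-c$. So $|P_{\text{out}}|=|P_{\text{in}}|=2$, as in the paper, not $k$. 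With your value you would discard extra edges without justification, giving edge bounds such as $k^4-2k^2$ for $n=4$ and $k^n-3k^{n/2}+k^2-k$ for $n>4$. These are neither proved nor the stated quantities. With the value $2$, together with $|U_{\text{out}}\cap U_{\text{in}}|\le k^2-k$ and $|U_{\text{out}}\cap P_{\text{in}}|=|P_{\text{out}}\cap U_{\text{in}}|=0$, you do obtain the stated bounds. In fact $|P_{\text{out}}\cap P_{\text{in}}|=0$ there, since any edge joining the all-$0$ and all-$k/2$ vertices is a uniform, hence negasymmetric, $n$-tuple.
\item[$\bullet$] For $n$ odd and $k$ even, the alternating tuples $(0,k/2,\dots,0,k/2)$ have even length and are not negasymmetric, since $a_0=-a_{n-2}$ would give $0=k/2$. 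So only the two uniform vertices qualify, and $|P_{\text{out}}|=|P_{\text{in}}|=2$, not $4$.
\end{itemize}

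\textbf{Tightening to the stated constants.} Your plan to tighten until the stated constants appear cannot be carried out legitimately when $n$ is odd and $k$ is even.

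For $n>4$, the corrected sets give $|U_{\text{out}}\cap P_{\text{in}}|=|P_{\text{out}}\cap U_{\text{in}}|=|P_{\text{out}}\cap P_{\text{in}}|=0$. The reasons are that a left-sns predecessor or right-sns successor of an all-$z$ vertex, $z\in\{0,k/2\}$, is itself uniform, and any edge joining two such vertices is uniform, hence negasymmetric. Meanwhile $|U_{\text{out}}\cap U_{\text{in}}|\le 4k-4$ cannot be lowered by this counting: the $4k-4$ candidate edges have distinct admissible sources and targets. So the bookkeeping yields $m\le(k^n-6k^{(n-1)/2}+4k)/2$, which is $(k-2)/2$ above the stated bound.

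The paper reaches $3k+2$ by taking $|P_{\text{out}}|=|P_{\text{in}}|=k$ and $|P_{\text{out}}\cap P_{\text{in}}|=k-2$. That is, it also counts the negasymmetric vertices $(c,-c,\dots,c,-c)$ with $c\notin\{0,k/2\}$. These are neither left-sns nor right-sns, so they have even in- and out-degree $k$, and Corollary~\ref{corollary:excluded-tuples-by-parity}(ii) does not cover them.

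Similarly, for $n=3$ the paper's counts $4$ and $2$ use two things that do not qualify. The vertices $(0,k/2)$ and $(k/2,0)$ are not negasymmetric. The edges $(0,k/2,0)$ and $(k/2,0,k/2)$ are negasymmetric and so absent from $B^-_k(2)$. The $P$-sets alone give only $(k^3-2k-4)/2$.

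That case can be repaired by also using the in/out imbalance at the $2(k-2)$ vertices $(z,y)$ with $z\in\{0,k/2\}$, $y\notin\{0,k/2\}$, which have in-degree $k-1$ and out-degree $k$. This gives at least $2k$ excluded edges and hence $m\le(k^3-4k)/2$, which implies the stated bound. For $n>4$ odd with $k$ even, reaching the stated constant needs an argument beyond this bookkeeping.
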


\begin{proof}
The proof builds on, and uses the notation of, Lemma~\ref{lemma:NOS-bounds-high-level}.
\begin{itemize}

\item[i)] {\bf $n=2$}.  If $k$ is odd then $P_{\text{out}}=\emptyset$ by
    Corollary~\ref{corollary:excluded-tuples-by-parity}(i), since there are no negasymmetric
    non-uniform 1-tuples.  If $k$ is even then, by
    Corollary~\ref{corollary:excluded-tuples-by-parity}(iii), $|P_{\text{out}}|=2$, since there
    are 2 uniform-alternating negasymmetric 1-tuples by
    Lemma~\ref{lemma:NOS_tuple_numbers}(vi). The result follows from
    Lemma~\ref{lemma:NOS-bounds-high-level}.

\item[ii)] {\bf $n=3$}.  If $k$ is odd then, by
    Corollary~\ref{corollary:excluded-tuples-by-parity}(i), $|P_{\text{in}}|=|P_{\text{out}}|$
    equals the number of negasymmetric non-uniform 2-tuples, i.e.\ $k-1$ by
    Lemma~\ref{lemma:NOS_tuple_numbers}(v),(vii). In this case $|P_{\text{in}}\cap
    P_{\text{out}}|=k-1$ by Lemma~\ref{lemma:NOS-constraint-interactions}(i) and
    Lemma~\ref{lemma:NOS_tuple_numbers}(ii),(iv).

    If $k$ is even then, by Corollary~\ref{corollary:excluded-tuples-by-parity}(ii),
    $|P_{\text{in}}|=|P_{\text{out}}|$ equals the number of uniform or alternating
    negasymmetric 2-tuples where every element is either 0 or $k/2$, i.e.\ 4. Next observe that
    when evaluating $|P_{\text{in}}\cap P_{\text{out}}|$ we need only consider alternating
    2-tuples in which every element is either 0 or $k/2$, since an edge can only be outgoing
    from a uniform negasymmetric ($n-1)$-tuple and incoming to a uniform negasymmetric
    ($n-1)$-tuple if it corresponds to a uniform negasymmetric $n$-tuple, and such an edge
    cannot occur in $B^-_k(n-1)$.  Then $|P_{\text{in}}\cap P_{\text{out}}|$ is equal to the
    number of alternating negasymmetric $n$-tuples in which every element is either 0 or $k/2$,
    i.e.\ 2.

    The result follows from Lemma~\ref{lemma:NOS-bounds-high-level}.

\item[iii)] {\bf $n=4$}. By Corollary~\ref{corollary:NOS_excluded_tuples_by_unequal_degree}(v),
    $|U_{\text{out}}|$ equals the number of non-uniform-alternating left-sns $3$-tuples, i.e.\
    $k^2-k$ by Lemma~\ref{lemma:NOS_tuple_numbers}(x).  If $k$ is odd then, by
    Corollary~\ref{corollary:excluded-tuples-by-parity}(i), $|P_{\text{out}}|$ equals the
    number of negasymmetric non-uniform 3-tuples, i.e.\ $k-1$ by
    Lemma~\ref{lemma:NOS_tuple_numbers}(i),(v). If $k$ is even then, by
    Corollary~\ref{corollary:excluded-tuples-by-parity}(iii), $|P_{\text{out}}|$ equals the
    number of uniform-alternating negasymmetric 3-tuples, i.e.\ 2 by
    Lemma~\ref{lemma:NOS_tuple_numbers}(vi). The result follows from
    Lemma~\ref{lemma:NOS-bounds-high-level}.

\item[iv)a)] {\bf $n>4$; $n$ and $k$ odd}.  By
    Corollary~\ref{corollary:NOS_excluded_tuples_by_unequal_degree}(i),(ii),
    $|U_{\text{out}}|=|U_{\text{in}}|$ equals the number of non-uniform left-sns
    $(n-1)$-tuples, i.e.\ $k^{(n-1)/2}-1$ by Lemma~\ref{lemma:NOS_tuple_numbers}(ix).

    Also, by Lemma~\ref{lemma:NOS-constraint-interactions}(iv), $|U_{\text{out}}\cap
    U_{\text{in}}|=k-1$ since if $n\equiv 1\pmod 4$ there are $k$ choices for $c_0$, and one
    choice each for $c_1$ and $c_3$, giving $k$ possibilities, one of which is uniform (when
    $c_0=c_1=c_3=0$). An analogous argument applies if $n\equiv 3 \pmod 4$.

    By Corollary~\ref{corollary:excluded-tuples-by-parity} (i),
    $|P_{\text{out}}|=|P_{\text{in}}|$ equals the number of negasymmetric non-uniform
    $(n-1)$-tuples, i.e.\ $k^{(n-1)/2}-1$ by Lemma~\ref{lemma:NOS_tuple_numbers}(i),(v).

    By Lemma~\ref{lemma:NOS-constraint-interactions}(ii),(iii), $|U_{\text{out}}\cap
    P_{\text{in}}|=|P_{\text{out}}\cap U_{\text{in}}|=k-1$ (removing the case where the $c_i$
    are all equal).

    By Lemma~\ref{lemma:NOS-constraint-interactions}(i), $|P_{\text{out}}\cap P_{\text{in}}|$
    equals the number of negasymmetric non-uniform $(n-1)$-tuples that are also
    uniform-alternating, i.e.\ $k-1$ by Lemma~\ref{lemma:NOS_tuple_numbers}(iv),(vi).

    The result follows from Lemma~\ref{lemma:NOS-bounds-high-level}.

\item[iv)b)] {\bf $n>4$; $n$ odd and $k$ even}.  By
    Corollary~\ref{corollary:NOS_excluded_tuples_by_unequal_degree}(iii),(iv),
    $|U_{\text{out}}|=|U_{\text{in}}|$ equals the number of non-uniform non-alternating
    left-sns $(n-1)$-tuples, i.e.\ $2k^{(n-1)/2}-4$ by Lemma~\ref{lemma:NOS_tuple_numbers}(xi).

    Also, by Lemma~\ref{lemma:NOS-constraint-interactions}(iv), $|U_{\text{out}}\cap
    U_{\text{in}}|=4k-4$ since if $n\equiv 1\pmod 4$ there are $k$ choices for $c_0$, and two
    choices each for $c_1$ and $c_3$, giving $4k$ possibilities, two of which are uniform and
    two of which are alternating. An analogous argument applies if $n\equiv 3 \pmod 4$.

    By Corollary~\ref{corollary:excluded-tuples-by-parity}(ii),
    $|P_{\text{out}}|=|P_{\text{in}}|$ equals the number of uniform or alternating
    negasymmetric $(n-1)$-tuples, i.e.\ $k$ by Lemma~\ref{lemma:NOS_tuple_numbers}(v),(vii).

    By Corollary~\ref{corollary:excluded-tuples-by-parity}(ii), $P_{\text{out}}$ and
    $P_{\text{in}}$ only contain edges out-going/in-going from/to uniform or alternating
    $(n-1)$-tuples, and by
    Corollary~\ref{corollary:NOS_excluded_tuples_by_unequal_degree}(iii),(iv), $U_{\text{out}}$
    and $U_{\text{in}}$ only contain edges that are out-going/in-going from/to non-uniform
    non-alternating $(n-1)$-tuples, and hence $|U_{\text{out}}\cap
    P_{\text{in}}|=|P_{\text{out}}\cap U_{\text{in}}|=\emptyset$.

    By Corollary~\ref{corollary:excluded-tuples-by-parity}(ii), $P_{\text{out}}$ and
    $P_{\text{in}}$ only contain edges out-going/in-going from/to uniform or alternating
    negasymmetric $(n-1)$-tuples, which by Lemma~\ref{lemma:NOS_tuple_numbers}(vii) are
    uniform-alternating. Hence an edge in $P_{\text{out}}\cap P_{\text{in}}$ must be
    uniform-alternating.  An edge in $P_{\text{out}}\cap P_{\text{in}}$ cannot be
    negasymmetric, and there are $k-2$ non-negasymmetric uniform-alternating $n$-tuples by
    Lemma~\ref{lemma:NOS_tuple_numbers}(iii),(v).  Hence$|P_{\text{out}}\cap
    P_{\text{in}}|=k-2$.

    The result follows from Lemma~\ref{lemma:NOS-bounds-high-level}.

\item[iv)c)] {\bf $n>4$; $n$ even}. By
    Corollary~\ref{corollary:NOS_excluded_tuples_by_unequal_degree}(v),(vi),
    $|U_{\text{out}}|=|U_{\text{in}}|$ equals the number of non-uniform-alternating left-sns
    $(n-1)$-tuples, i.e.\ $k^{n/2}-k$ by Lemma~\ref{lemma:NOS_tuple_numbers}(x).

    Additionally, by Lemma~\ref{lemma:NOS-constraint-interactions}(iv), $|U_{\text{out}}\cap
    U_{\text{in}}|=k(k-1)$, by choosing $c_0$ and $c_2$ to be distinct.

    {\bf If $k$ is odd} then, by Corollary~\ref{corollary:excluded-tuples-by-parity}(i),
    $|P_{\text{out}}|=|P_{\text{in}}|$ equals the number of negasymmetric non-uniform
    $(n-1)$-tuples, i.e.\ $k^{(n-2)/2}-1$ by Lemma~\ref{lemma:NOS_tuple_numbers}(i),(v).
     \newline By Lemma~\ref{lemma:NOS-constraint-interactions}(ii),(iii), $|U_{\text{out}}\cap
    P_{\text{in}}|=|P_{\text{out}}\cap U_{\text{in}}|=k-1$, by ensuring that $c_0$,
    $c_1$ and $c_2$ are not all equal to $0$.
     \newline By Lemma~\ref{lemma:NOS-constraint-interactions}(i), $|P_{\text{out}}\cap
    P_{\text{in}}|=0$, i.e.\ the number of non-uniform alternating $(n-1)$-tuples in which
    every entry is either 0 or $k/2$ when $k$ is odd.

    {\bf If $k$ is even} then, by Corollary~\ref{corollary:excluded-tuples-by-parity}(iii),
    $|P_{\text{out}}|=|P_{\text{in}}|$ equals the number of uniform-alternating negasymmetric
    $(n-1)$-tuples, i.e.\ 2 by Lemma~\ref{lemma:NOS_tuple_numbers}(vi).
     \newline By Corollary~\ref{corollary:excluded-tuples-by-parity}(iii), $P_{\text{out}}$
    and $P_{\text{in}}$ respectively only contain edges out-going from or in-going to
    uniform-alternating $(n-1)$-tuples, and by
    Corollary~\ref{corollary:NOS_excluded_tuples_by_unequal_degree}(v),(vi), $U_{\text{out}}$
    and $U_{\text{in}}$ only contain edges that are out-going/in-going from/to
    non-uniform-alternating $(n-1)$-tuples, and hence $|U_{\text{out}}\cap
    P_{\text{in}}|=|P_{\text{out}}\cap U_{\text{in}}|=\emptyset$.
     \newline Finally, it is immediate that $|P_{\text{out}}\cap P_{\text{in}}|=2$,
    i.e.\ the number of non-uniform alternating $(n-1)$-tuples in which every entry is either 0
    or $k/2$ when $k$ is even.

    The result follows from Lemma~\ref{lemma:NOS-bounds-high-level}.

\end{itemize}
\end{proof}

The bounds resulting from Theorem~\ref{theorem:new_NOS_bounds} are tabulated for small $k$ and $n$
in Table~\ref{table:new_NOS_periods_bounds}.  Note that the numbers given in brackets are the
bounds derived from \cite[Theorem 3.10]{Mitchell25a}, and are provided for comparison purposes.

\begin{table}[htb]
\centering \caption{Bounds --- new and (old) --- on the period of an $\mathcal{NOS}_k(n)$}
\label{table:new_NOS_periods_bounds}
\begin{footnotesize}
\begin{tabular}{crrrrrrr} \hline
$n$ & $k=3$  & $k=4$   & $k=5$   & $k=6$    & $k=7$     & $k=8$     & $k=9$       \\ \hline
2   & 3      & 5       & 10      & 14       & 21        & 27        & 36         \\
    & (3)    & (5)     & (10)    & (14)     & (21)      & (27)      & (36)       \\ \hline
3   & 11     & 25      & 58      & 99       & 165       & 245       & 356        \\
    & (11)   & (27)    & (58)    & (101)    & (165)     & (247)     & (356)      \\ \hline
4   & 32     & 113     & 288     & 614      & 1152      & 1987      & 3200       \\
    & (35)   & (119)   & (298)   & (629)    & (1173)    & (2015)    & (3236)     \\ \hline
5   & 105    & 471     & 1510    & 3790     & 8295      & 16205     & 29340      \\
    & (113)  & (495)   & (1538)  & (3851)   & (8355)    & (16319)   & (29444)     \\ \hline
6   & 324    & 1961    & 7620    & 23024    & 58296     & 130339    & 264600     \\
    & (347)  & (2015)  & (7738)  & (23219)  & (58629)   & (130815)  & (265316)   \\ \hline
7   & 1032   & 8007    & 38760   & 139330   & 410928    & 1047053   & 2389680    \\
    & (1067) & (8127)  & (38938) & (139751) & (411429)  & (1048063)  & (2390756) \\ \hline
8   & 3141   & 32393   & 194270  & 837884   & 2878491   & 8382499   & 21512844   \\
    & (3227) & (32639) & (194938)& (839159) & (2881029) & (8386559) & (21519716) \\ \hline
9   & 9645   & 130311  & 975010  & 5034970  & 20170815  & 67096589  & 193693860  \\
    & (9761) & (130815)& (975938)& (5037551)& (20174403)& (67104767)& (193703684)\\ \hline
\end{tabular}
\end{footnotesize}

\end{table}

\section{Concluding remarks}  \label{section:conclusions}

Although for $n>2$ the new bounds are sharper than those previously known, there remains a gap
between the largest known period of a $\mathcal{NOS}_k(n)$ and the bound, even for small values of
$n$. The current state of knowledge for small $n$ and $k>2$ is summarised in
Table~\ref{table:largest_NOS_periods}, where the upper bound from
Theorem~\ref{theorem:new_NOS_bounds} is given in brackets beneath the largest known period of a
$\mathcal{NOS}_k(n)$. The sequences with the largest known period are derived from \cite[Lemma
3.9]{Mitchell25a}. The values in bold represent maximal values.

\begin{table}[htb]
\centering \caption{Largest known periods for an $\mathcal{NOS}_k(n)$ (and bounds)}
\label{table:largest_NOS_periods}

\begin{tabular}{crrrrrr} \hline
$n$ & $k=3$   & $k=4$    & $k=5$    & $k=6$    & $k=7$     & $k=8$     \\ \hline
2   & {\bf 3} & {\bf 5}  & {\bf 10} & {\bf 14} & {\bf 21}  & {\bf 27}  \\
    & (3)     & (5)      & (10)     & (14)     & (21)      & (27)      \\ \hline
3   & 10      & 22       & 56       & 89       & 162       & 225       \\
    & (11)    & (25)     & (58)     & (99)     & (165)     & (245)     \\ \hline
4   & 31      & 93       & 278      & 550      & 1109      & 1835      \\
    & (32)    & (113)    & (288)    & (614)    & (1152)    & (1987)    \\ \hline
5   & 96      & 386      & 1432     & 3362     & 8008      & 14858     \\
    & (105)   & (471)    & (1510)   & (3790)   & (8295)    & (16205)   \\ \hline
6   & 294     & 1586     & 7162     & 20441    & 55518     & 119895    \\
    & (324)   & (1961)   & (7620)   & (23024)  & (58296)   & (130339)  \\ \hline
7   & 897     & 6476     & 36220    & 123895   & 393991    & 965569    \\
    & (1032)  & (8007)   & (38760)  & (139330) & (410928)  & (1047053) \\ \hline
8   & 2727    & 26333    & 181550   & 749422   & 2748581   & 7766075   \\
    & (3141)  & (32393)  & (194270) & (837884) & (2878491) & (8382499) \\ \hline
\end{tabular}
\end{table}

While it was already known \cite[Note 3.12]{Mitchell25a} how to construct optimal period negative
orientable sequences for $n=2$, it should be clear that determining the optimal period for a
$\mathcal{NOS}_k(n)$ remains an open question for all $n>2$ and all $k$. Addressing this question
remains an area for future research.  Indeed, the smallest open cases (i.e.\ $n=3$ and small $k$)
appear readily resolvable by computer search.


\providecommand{\bysame}{\leavevmode\hbox to3em{\hrulefill}\thinspace}
\providecommand{\MR}{\relax\ifhmode\unskip\space\fi MR }
\providecommand{\MRhref}[2]{%
  \href{http://www.ams.org/mathscinet-getitem?mr=#1}{#2}
} \providecommand{\href}[2]{#2}

\end{document}